\documentclass[dvipsnames,onefignum,onetabnum,final]{siamart250211}

\title{Computing accurate singular vectors and eigenvectors using
  mixed-precision Jacobi algorithms}

\author{ Zhengbo~Zhou\footnote{C\MakeLowercase{orresponding author
      (\email{zhengbo.zhou@postgrad.manchester.ac.uk}).}}
  \thanks{Department of Mathematics,
    University of Manchester,
    Manchester M13 9PL,
    United~Kingdom. \\ \textbf{Funding:} The first author was supported by  the University of Manchester Research
    Scholar Award and the second author was supported by Engineering and Physical Sciences Research Council grant EP/W018101/1.} \and Fran\c{c}oise~Tisseur\footnotemark[2]
  \and Marcus~Webb\footnotemark[2] 
  }

\headers{Accurate singular vectors and eigenvectors}{
  Zhengbo~Zhou, Fran\c{c}oise~Tisseur and Marcus~Webb}

\usepackage{amsfonts}
\usepackage{amsmath}
\usepackage{amssymb} 
\usepackage{bookmark}
\usepackage{booktabs}
\usepackage{enumitem}
\usepackage{graphics,graphicx}
\usepackage{mathtools}
\usepackage[utf8]{inputenc}
\usepackage{upref}
\usepackage{subdepth}
\usepackage[normalem]{ulem}
\usepackage{comment}
\usepackage[caption=false]{subfig}
\definecolor{mycolor1}{rgb}{0.06667,0.44314,0.74510}%
\definecolor{mycolor2}{rgb}{0.86667,0.32941,0.00000}%
\definecolor{mycolor3}{rgb}{0.92941,0.69412,0.12549}%
\definecolor{mycolor4}{rgb}{0.23137,0.66667,0.19608}%
\definecolor{mycolor5}{rgb}{0.12941,0.12941,0.12941}%

\usepackage{tcolorbox}
\tcbuselibrary{listings,breakable}
\colorlet{LightCornflowerBlue}{CornflowerBlue!50}
\newtcolorbox{mybox}{
  colback=LightCornflowerBlue,
  colframe=black,
  boxrule=0pt,
  breakable,
  arc=0pt,
  outer arc=0pt
}

\makeatletter
\g@addto@macro\bfseries{\boldmath}
\makeatother

\usepackage{pgfplots,tikz}
\pgfplotsset{
  width=0.35\textwidth,
  height=0.28\textwidth,
  scale only axis,
  xlabel near ticks,
  ylabel near ticks,
  every axis title shift=3pt,
  grid=both,
}
\usetikzlibrary{plotmarks}
\pgfplotsset{compat=1.18}

\usepackage{algorithm}
\usepackage{algorithmicx}
\usepackage{algpseudocode}

\algrenewcommand\algorithmiccomment[1]{\hfill\textcolor{red}{\%\ #1}}
\usepackage{chngcntr}
\counterwithout{algorithm}{section}

\newsiamthm{assumption}{Assumption}
\newsiamremark{remark}{Remark}

\mathchardef\Gamma="7100
\mathchardef\Delta="7101
\mathchardef\Theta="7102
\mathchardef\Lambda="7103
\mathchardef\Xi="7104
\mathchardef\Pi="7105
\mathchardef\Sigma="7106
\mathchardef\Upsilon="7107
\mathchardef\Phi="7108
\mathchardef\Psi="7109
\mathchardef\Omega="710A

\newcommand{\R}{\mathbb{R}}

\newcommand{\wh}{\widehat}
\newcommand{\wt}{\widetilde}

\DeclareFontFamily{U}{mathx}{}
\DeclareFontShape{U}{mathx}{m}{n}{<-> mathx10}{}
\DeclareSymbolFont{mathx}{U}{mathx}{m}{n}
\DeclareMathAccent{\widecheck}{0}{mathx}{"71}

\newcommand{\mn}{^{m\times n}}

\newcommand{\nn}{^{n\times n}}
\newcommand{\tp}{^{T}}

\newcommand{\inv}{^{-1}}
\newcommand{\pinv}{^{\dagger}}
\newcommand{\diag}{\operatorname{diag}}

\newcommand{\sspan}{\operatorname{span}}

\newcommand{\abs}[1]{\lvert{#1}\rvert}

\newcommand{\norm}[1]{\|{#1}\|}

\newcommand{\tnorm}[1]{\norm{#1}_2}

\DeclareMathOperator{\fl}{\operatorname{f\kern.2ptl}}

\newcommand{\iter}[1]{^{(#1)}}

\newcommand{\eps}{{\varepsilon}}

\newcommand{\ul}{{u_{\ell}}}
\newcommand{\uh}{{u_{h}}}

\usepackage{listings}
\definecolor{gray}{rgb}{0.5,0.5,0.5}
\definecolor{mauve}{rgb}{0.58,0,0.82}
\definecolor{lightgrey}{rgb}{0.9,0.9,0.9}
\definecolor{darkgreen}{rgb}{0,0.6,0}
\lstdefinestyle{mystyle}{%
  language=matlab,
  morekeywords={anymatrix},
  showstringspaces=false,
  columns=flexible,
  keepspaces = true,
  basicstyle={\small\ttfamily},
  numbers=none,
  numberstyle=\tiny\color{gray},
  keywordstyle=\color{blue},
  commentstyle=\color{darkgreen},
  stringstyle=\color{mauve},
  breakatwhitespace=true,
  upquote=true,
  xleftmargin=5.0ex
}
\def\inline{\lstinline[basicstyle=\upshape\ttfamily]}
\usepackage[T1]{fontenc}
\LoadFontDefinitionFile{\encodingdefault}{cmtt}
\ExpandArgs{c}\def{\encodingdefault/cmtt/m/it}{<->ssub*cmtt/m/sl}

\def\2nsit{2--step NS iteration}

\def\At{{\wt{A}}}

\def\Qt{{\wt{Q}}}

\newcommand{\Atcomp}{{\wt{A}_{\mathrm{comp}}}}
\newcommand{\Athcomp}{{\wt{A}_{\mathrm{h,comp}}}}

\def\at{\wt{a}}

\def\gammah{\gamma_{h}}
\def\scondt{\kappa_{2}^{S}}
\def\scond{\kappa_{2}^{D}}
\def\k{\kappa_{2}}

\def\Vt{\wt{V}}
\def\Rt{\wt{R}}
\def\Rtcomp{{\wt{R}_{\mathrm{comp}}}}

\def\vh{\wh{v}}

\newcommand{\svecerrk}[1][k]{\eps_{\mathrm{sv}}\iter{#1}}
\newcommand{\evecerrk}[1][k]{\eps_{\mathrm{ev}}\iter{#1}}
\def\evgap{\operatorname{evrg}}
\def\svgap{\operatorname{svrg}}
\def\qh{\wh{q}}
\def\Qb{W}

\begin{document}
\maketitle

\begin{abstract}
Mixed-precision variants of the Jacobi algorithm for symmetric positive definite eigenproblems and the one-sided Jacobi algorithm for singular value decompositions have recently been shown to compute eigenvalues and singular values to high relative accuracy. However, these analyses do not address the accuracy of the computed eigenvectors and singular vectors. 
In this paper, we prove error bounds for the computed eigenvectors and singular vectors, where the error is measured by the sine of the angle between the vector and its computed counterpart. The obtained bounds preserve the relative gap structure of the bounds for Jacobi algorithms proved by Demmel and Veseli\'{c}, but involve the scaled condition number of the preconditioned matrix rather than that of the original matrix (the former of which is typically much smaller). 
Numerical experiments support our theoretical bounds and demonstrate that the mixed-precision preconditioned Jacobi algorithms are especially effective for ill-conditioned matrices with small absolute gaps and moderate relative gaps between eigenvalues or singular values.
\end{abstract}

\begin{keywords}
Jacobi algorithm, spectral decomposition,
singular value decomposition, 
singular vector, eigenvector,
mixed-precision algorithm,
rounding error analysis
\end{keywords}

\begin{MSCcodes}
15A18, 
65F15,
65G50
\end{MSCcodes}

\section{Introduction}
The two-sided Jacobi algorithm~\cite{jaco46} for symmetric positive definite matrices and its one-sided variant~\cite{nash75} for general matrices are iterative methods for computing the spectral decomposition and the singular value decomposition (SVD), respectively. Recently, we proposed mixed-precision variants of Jacobi algorithms that compute eigenvalues and singular values to a high relative accuracy~\cite{htwz25,ztw26}. These variants use mixed precision to compute a preconditioned matrix for which standard Jacobi algorithms are highly accurate~\cite{deve92}.
Their analyses, however, focus only on the relative accuracy of the computed eigenvalues and singular values, and do not address the accuracy of the computed eigenvectors and singular vectors.
The accuracy of these vectors is important in applications, including many-body Hamiltonian calculations in physics~\cite{yshz08} and singular subspace estimation in statistics~\cite{cazh18}, such as canonical correlation analysis~\cite{ggsw19}.

The accurate computation of eigenvectors and singular vectors by Jacobi-type algorithms has been addressed by Demmel and Veseli\'{c}~\cite{deve92}, Mathias~\cite{math95}, and Drma\v{c} and Veseli\'{c}~\cite{drve08i}.
Recent approaches for computing accurate eigenpairs~\cite{ogai18}~\cite{toio26} and singular pairs~\cite{ogai20} use iterative refinement based on Newton-type methods~\cite{dmw83}, which gradually improves existing eigenpairs and singular pairs.
Instead, we seek to prove the high accuracy of the computed eigenvectors and singular vectors by the mixed-precision Jacobi algorithms proposed in~\cite{htwz25} and~\cite{ztw26}.

To measure the accuracy of the computed right singular vectors and eigenvectors, for nonzero vectors $x$ and $y$, we use $\angle(x,y)$, 
the angle between the subspaces $\sspan\{x\}$ and $\sspan\{y\}$, defined by%
\begin{equation}
  \label{eq.subspace-diff-angle-def}
  \angle(x,y) = \arccos \bigg( \frac{|x\tp y|}{\tnorm{x}\tnorm{y}} \bigg).
\end{equation}
One can verify that $\angle(x,y) \in [0,\pi/2]$, $\angle(x,y) = \angle(y,x)$, and $\angle(Qx,Qy) = \angle(x,y)$ for any orthogonal matrix $Q$.
In this paper, we focus on%
\begin{equation}
    \label{eq.general-sin-angle-bound} 
    \sin\angle\big( v_k(A), \wh{v}_k(A) \big),
\end{equation}
where $v_k(A)$ denotes the exact right singular vector or eigenvector associated with the $k$th largest singular value or eigenvalue of $A$, respectively, and $\wh{v}_k(A)$ denotes its computed counterpart. As proved in~\cite{deve92}, the $k$th right singular vector computed by the one-sided Jacobi algorithm admits bounds of the form%
\begin{equation}
    \label{eq.deve-bound-svecs} 
    \sin\angle\big( v_k(A), \wh{v}_k(A) \big) \le p(m,n) u \scond(A)/\svgap(A,k),
\end{equation}
where $p(m,n)$ is some polynomial in $m$ and $n$, and%
\begin{align}%
    \notag
    \scond(A) &:= \k(AD), \quad D = \diag(\tnorm{a_i}\inv),\\
    \notag 
    \svgap(A,k) &:=
    \min\bigg\{2,\min_{j\neq k}\frac{\abs{\sigma_k(A)-\sigma_j(A)}}{\sigma_k(A)}\bigg\},
\end{align}
are the one-sided scaled condition number and the relative gap for the $k$th singular value, respectively. There are several different notions of a relative singular value gap in the literature; the definition of $\svgap(A,k)$ above allows us to use the results of Eisenstat and Ipsen \cite{eiip95}. The $k$th eigenvector computed by the two-sided Jacobi algorithm admits bounds of a similar form,%
\begin{equation}
    \notag 
    \sin\angle\big( v_k(A), \wh{v}_k(A) \big) \le q(n)u \scondt(A)/\evgap(A,k), \qquad 
\end{equation}
where $q(n)$ is some polynomial in $n$, and
\begin{align}
    \notag
    \scondt(A)  &:= \k(DAD),\quad D = \diag(a_{ii}^{-1/2}), \\
    \label{eq.evgap-deve}
    \evgap(A,k) &:= \min_{j\neq k}\frac{|\lambda_{k}(A)-\lambda_{j}(A)|}{\sqrt{\lambda_{k}(A)\lambda_{j}(A)}}
\end{align}
are the two-sided scaled condition number and the relative gap for the $k$th eigenvalue, respectively.
Here, $\k(A) = \sigma_{\max}(A)/\sigma_{\min}(A)$ denotes the $2$ norm condition number of $A$. 
For bidiagonalization and tridiagonalization-based methods
(e.g., QR algorithm, divide-and-conquer), 
it is the reciprocal of the absolute gaps, $\min_{j\neq k} |\sigma_k(A) - \sigma_j(A)|$ and $\min_{j\neq k} |\lambda_k(A) - \lambda_j(A)|$, that appears in the error bounds~\cite[Thm.~11.7.1]{parl98-SEP},~\cite{dges99}. 
More specifically, if $A$ has 
at least two small singular values, then their absolute gap is necessarily small, whereas their relative gap may remain moderate, leading to tighter error bounds.
The same argument applies to eigenvalues~\cite{deve92}.

Our contributions on the computed right singular vectors are as follows.
We derive an upper bound, analogous to~\eqref{eq.deve-bound-svecs}, on $\sin\angle\big( v_k(A), \wh{v}_k(A) \big)$, but with $\scond(A)$ replaced by $\scond(\At)$, where $\At = A\Vt$ is the preconditioned matrix, and $\Vt$ is the preconditioner generated by the algorithms described in~\cite[sect.~3]{ztw26}. Thus, the right singular vectors computed by our algorithm can achieve higher accuracy because $\scond(\At)$ may be moderate even when $\scond(A)$ is large, especially for ill-conditioned matrices~\cite{ztw26}.
An analogous improvement applies to the computed eigenvectors, with the corresponding scaled condition number replaced by that of the preconditioned matrix.

In this work, we consider only symmetric positive definite matrices with distinct eigenvalues and general real matrices with distinct singular values. Generalizations to multiple eigenvalues or singular values are highly nontrivial, but as a starting point we refer the interested reader to~\cite[sect.~5.5]{drve08i},~\cite{li98}, and~\cite[Chap.~V]{stsu90-MPT}. 

The numerical experiments demonstrate that our algorithm delivers computed right singular vectors with smaller errors than the other tested methods for matrices with small absolute gaps, but large relative gaps, such as ill-conditioned random matrices with geometrically distributed singular values. The same applies to the mixed-precision preconditioned two-sided Jacobi algorithm for eigenvectors.

The rest of the article is organized as follows. In section~\ref{sec.acc-svals}, we first present the mixed-precision preconditioned one-sided Jacobi algorithm and bound the error measure~\eqref{eq.def-svecerrk}. We then prove the analogous bound for the mixed-precision preconditioned two-sided Jacobi algorithm in section~\ref{sec.accuracy-of-evec}.
Numerical experiments are presented in section~\ref{sec.numerical-experiments} to support Theorems~\ref{thm.main-thm-svec} and~\ref{thm.main-thm-evec}.
Finally, conclusions are given in section~\ref{sec.conclusion}.

\section{Accuracy of the computed singular vectors} \label{sec.acc-svals}
We prove in Theorem~\ref{thm.main-thm-svec} that the mixed-precision one-sided Jacobi algorithm proposed in~\cite{ztw26} computes the right singular vectors of a general real matrix with high accuracy. 
The same accuracy result holds for the left singular vectors as a consequence of~\cite[Thm.~3.3]{eiip95} so we do not discuss it explicitly, see Remark~\ref{rmk.approach-for-left-svecs}.


\begin{algorithm}[bthp!]
\caption{Mixed-precision preconditioned one-sided Jacobi algorithm}
\label{alg.mp-precond-onesided-Jacobi}
\begin{algorithmic}[1]
\Require{A full-rank matrix $A \in \R\mn$ with $m \geq n$, two precisions $u$ and $\uh$ with $0 < \uh < u$, and a preconditioner $\Vt$ satisfying $\tnorm{\Vt\tp\Vt - I} \le p_1u < 1/2$.}
\Ensure{A computed SVD $\wh{U}\wh{\Sigma}\wh{V}\tp$ of $A$.}

\State{\label{li.compute-Atcomp}%
  Compute the preconditioned matrix $\At$ by computing the product $A\Vt$ at precision $\uh$, which gives $\Athcomp$, and then demoting it to precision $u$, which yields $\Atcomp$.}
\State{\label{li.compute-SVD}%
  Compute an SVD, $\Atcomp\tp = \wh{V}_J \wh\Sigma_{J} \wh{U}_{J}\tp$ using the one-sided Jacobi algorithm with stopping criterion~\cite[Eq.~1.1]{ztw26} at precision $u$.}
\State{\label{li.compute-right-svecs}%
  Construct the right singular vector matrix $V = \Vt \wh{V}_{J}$ at precision $u$, and set $\wh{U} = \wh{U}_{J}$ and $\wh{\Sigma} = \wh{\Sigma}_{J}$.}
\end{algorithmic}
\end{algorithm}

\begin{remark}
Algorithm~\ref{alg.mp-precond-onesided-Jacobi} is a slight modification of the mixed-precision preconditioned one-sided Jacobi algorithm~\cite{ztw26}, where
step~\ref{li.compute-SVD} 
applies the one-sided Jacobi algorithm to $\Atcomp\tp$ rather than to $\Atcomp$.
This modification is only made to simplify the analysis and does not affect the level of accuracy of the computed singular values (proved in \cite{ztw26}). 
As shown by Mathias~\cite{math95}, if $\Atcomp$ has poorly scaled columns and the one-sided Jacobi algorithm is applied on the right, then the relative forward error of the computed singular values is proportional to $u \max_k \k(\Atcomp^{(k)} D_c^{(k)})$, where $u$ is the precision at which the one-sided Jacobi algorithm is performed, $\Atcomp^{(k)}$ is the matrix after $k$ one-sided Jacobi steps with $\Atcomp^{(0)}=\Atcomp$, and $D_c^{(k)}$ is a diagonal scaling such that $\Atcomp^{(k)} D_c^{(k)}$ has unit column 2-norms.
In contrast, applying the method on the left of $\Atcomp$ (equivalently, on the right of $\Atcomp\tp$) yields a bound proportional to $u \k(\Atcomp D_c)$, where $D_c$ scales the columns of $\Atcomp$ to unit norm. Thus, the one-sided Jacobi is able to deliver high relative accuracy when the input matrix has the form $\Atcomp = BD$ or $\Atcomp = DB$ where $D$ is a diagonal scaling matrix and $B$ is well-conditioned~\cite{domo04}.
The quantities $\max_k \k(\Atcomp^{(k)} D_c^{(k)})$ and $\k(\Atcomp D_c)$ are, in general, not the same: the former reflects the worst conditioning encountered along the iteration, while the latter depends only on the initial scaling.
However, extensive numerical evidence in~\cite[sect.~7.4]{deve92} and~\cite[chap.~5]{slap92} shows that their ratio remains moderate in practice.
At present, a general theoretical explanation for this behavior is still not available; see also~\cite{drma20a}.
\end{remark}

\subsection{One-sided Jacobi and assumptions}

Let us write
\begin{equation}\label{eq.gamma-gammah}
  \gamma_{h} \coloneq \frac{nu_{h}}{1-nu_{h}} <
  \gamma \coloneq \frac{nu}{1-nu} < 1.
\end{equation}
As in~\cite{ztw26}, we make the following assumptions.
\begin{assumption}\label{ass.one-sided-Jacobi}
    Let $u$, $u_{h}$, $\gamma$, and $\gammah$ be as in \eqref{eq.gamma-gammah}, and let $A, \At\in\R^{m\times n}$ and $p_{1}$ be as in
    Algorithm~\ref{alg.mp-precond-onesided-Jacobi}.
    We assume
    \begin{enumerate}[label=\textup{(A\arabic*)}]
    \item\label{it.6nukA}
    $6nu(1-p_{1}u)\inv\k(A)<1$,
    \item\label{it.gammah<u}
    $\gammah <  \frac{(1-p_1u)u}{4(1+p_1u)\k(A)}$, and 
    \item\label{it.scond}
    $4\sqrt{m}u < 1$ and $16m\sqrt{n}u\scond(\At)< 1$.
    \end{enumerate}
    Additionally, we assume that the singular values of $A$, $\At$ and $\Atcomp$ are simple.
\end{assumption}

We focus on the following error for the $k$th computed right singular vector
\begin{equation}\label{eq.def-svecerrk}
  \svecerrk
  := \sin \angle\big(v_k(A), \vh_k(A)\big), 
\end{equation}
where $v_k(A)$ denotes the right singular vectors of $A$ associated with its $k$th largest singular value, 
and $\vh_k(A) = \fl\big(\Vt \vh_k(\Atcomp)\big)$ denotes the quantity $\Vt \vh_k(\Atcomp)$ formed at working precision $u$~\cite[Chap.~2]{high02-ASNA2}, that is, the $k$th right singular vector returned by Algorithm~\ref{alg.mp-precond-onesided-Jacobi}, where $\wh{v}_k(\Atcomp)$ denotes the computed right singular vector of $\Atcomp$ associated with its $k$th largest computed singular value. 

Our main result on the error in the right singular vectors is the following theorem. The proof is contained in section \ref{sec.boundingerrorsvecs}.
\begin{theorem}\label{thm.main-thm-svec}
Let $A,\At \in\R\mn$ be as in Algorithm~\ref{alg.mp-precond-onesided-Jacobi}. If Assumption~\ref{ass.one-sided-Jacobi} holds and $p_3 u\scond(\At)$ is bounded by $\min\{1/6, \min_{1\le k\le n} \svgap(A,k)/6\}$, where $p_3$ is defined in~\eqref{eq.def-of-p3}, then
\begin{equation} \label{eq.final-bound-for-svec}
  \svecerrk \le p_{4}u\left( 1 + \frac{\scond(\At)}{\svgap(A,k)} \right).
\end{equation}
\end{theorem}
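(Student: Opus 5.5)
We will bound $\svecerrk$ by the triangle inequality for angles, which holds because $\angle(\cdot,\cdot)$ is a metric on one-dimensional subspaces:
\begin{equation*}
\angle\big(v_k(A),\vh_k(A)\big) \le \angle\big(v_k(A),\Vt v_k(\Atcomp)\big) + \angle\big(\Vt v_k(\Atcomp),\Vt\vh_k(\Atcomp)\big) + \angle\big(\Vt\vh_k(\Atcomp),\fl(\Vt\vh_k(\Atcomp))\big).
\end{equation*}
We then use $\sin(\alpha+\beta+\delta)\le\sin\alpha+\sin\beta+\sin\delta$ when the sum is at most $\pi/2$; this is ensured by the smallness hypotheses on $p_3u\scond(\At)$. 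The three terms correspond to three error sources: forming and rounding $\At$ (step~\ref{li.compute-Atcomp}), the Jacobi SVD at precision $u$ (step~\ref{li.compute-SVD}), and the final multiplication by $\Vt$ (step~\ref{li.compute-right-svecs}).

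\textbf{Third term.} This is routine. Standard matrix--vector error analysis gives $\fl(\Vt x)=\Vt x+f$ with $\tnorm{f}\le\gamma\tnorm{\Vt}\tnorm{x}$. Since $\tnorm{\Vt\tp\Vt-I}\le p_1u$, the matrix $\Vt$ is nearly orthogonal, so $\tnorm{\Vt}$ and $\tnorm{\Vt\inv}$ are $1+O(u)$. This yields a sine bound of order $nu$.

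\textbf{Second term.} Near-orthogonality of $\Vt$ implies that $\Vt$ distorts angles by at most a factor $\k(\Vt)=1+O(u)$; concretely, $\sin\angle(\Vt x,\Vt y)\le\k(\Vt)\sin\angle(x,y)$. It therefore suffices to bound $\sin\angle(v_k(\Atcomp),\vh_k(\Atcomp))$. For this we invoke the Demmel--Veseli\'c result~\eqref{eq.deve-bound-svecs} applied to the one-sided Jacobi method on $\Atcomp\tp$, whose right-side scaling is the column scaling of $\Atcomp$. This gives a bound $p(m,n)u\,\scond(\Atcomp)/\svgap(\Atcomp,k)$. Two perturbation facts then convert this into quantities for $\At$ and $A$:
\begin{itemize}
\item $\scond(\Atcomp)\le 2\scond(\At)$, because $\Atcomp=\At+E$ with columnwise small $E$ (using (A2) and $\gammah$ small);
\item $\svgap(\Atcomp,k)\ge\svgap(A,k)/2$, because the singular values of $\Atcomp$ are relative perturbations of size $O(u\scond(\At))$ of those of $A$ (the relative accuracy result from \cite{ztw26}), together with the hypothesis $p_3u\scond(\At)\le\svgap(A,k)/6$.
\end{itemize}

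\textbf{First term (main obstacle).} This compares the exact singular vector of $A$ with $\Vt$ times the exact singular vector of $\Atcomp$. We write $\Atcomp=(A+\Delta A)\Vt$ with $\Delta A$ columnwise or normwise small relative to $\At$. We also write $\Vt=Q(I+F)$ with $Q$ orthogonal and $\tnorm{F}=O(u)$, using the polar decomposition, so that $\Atcomp=(A+\Delta A)Q(I+F)$. Since $Q$ is orthogonal, $v_k(AQ)=Q\tp v_k(A)$. The remaining perturbation is a multiplicative one, $AQ\mapsto (I+E_1)AQ(I+F)$, with $E_1$ small. The Eisenstat--Ipsen multiplicative perturbation theorem \cite{eiip95} then bounds the sine of the right-singular-vector angle by $O(\tnorm{E_1}+\tnorm{F})/\svgap(A,k)$, up to $O(u)$ additive terms.

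The difficulty is that $\Delta A$ is small only relative to $\At$ columnwise, not relative to $A$. Writing it multiplicatively as $E_1=\Delta A\,A\inv$ would cost a factor $\k(A)$. To avoid this we write it on the right instead: $\Delta A\,\Vt=\At G$ with $\tnorm{G}\le O(u)\scond(\At)$, using the column scaling $D$ so that $G=D\inv\At\inv$-type terms are controlled by $\k(\At D)$. This gives the factor $\scond(\At)/\svgap(A,k)$ via Eisenstat--Ipsen's right multiplicative bound. The residual $\Vt\mapsto Q$ correction and the conversion of angles through $\Vt$ supply the additive $O(u)$ term.

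Collecting constants into $p_4$ yields~\eqref{eq.final-bound-for-svec}.
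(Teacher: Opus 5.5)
\textbf{The gap is in your first term, which is the step that carries the theorem.} You say the left factor $E_1=\Delta A\,A\inv$ ``would cost a factor $\k(A)$''. That premise is false. Since $\Delta A=\Delta\At\,\Vt\inv$, $\At=A\Vt$, and $A$ has full column rank,
$E_1=\Delta A\,A\pinv=\Delta\At\,\At\pinv=\Delta\At D_{\At}(\At D_{\At})\pinv$.
Because $\Delta\At$ is columnwise small relative to $\At$, this has norm at most $3\sqrt{mn}\,u\scond(\At)$. That is exactly the paper's Lemma~\ref{lem.computed-jacobi-mult-per-A} together with the bound that follows it.

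The right-multiplicative substitute you adopt does not have the property you claim:
\begin{itemize}
\item For $m>n$ it need not exist, since $\At(I+G)=\At+\Delta\At$ forces the columns of $\Delta\At$ into $\operatorname{range}(\At)$, which rounding errors do not respect.
\item For $m=n$ you have $G=\At\inv\Delta\At=D_{\At}\big[(\At D_{\At})\inv\Delta\At D_{\At}\big]D_{\At}\inv$. The diagonal similarity can inflate the norm by $\k(D_{\At})$, which for a good preconditioner is about $\k(A)$.
\end{itemize}
Concretely, take $A=U\Sigma V\tp$, $\Vt=V$, $\At=U\Sigma$, so $\scond(\At)=1$. The perturbation $\Delta\At=u\,\sigma_1(A)\,Ue_ne_1\tp$ is columnwise $u$-small. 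Yet $\tnorm{\At\inv\Delta\At}=u\,\k(A)$, whereas $\tnorm{\Delta\At\,\At\inv}=u$. So your claim $\tnorm{G}\le O(u)\scond(\At)$ fails. The right-multiplicative Eisenstat--Ipsen bound would then give only $O(u\k(A))/\svgap(A,k)$, which defeats the purpose of preconditioning.

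\textbf{How the paper avoids this.} The fix is to keep the left factor, and the paper goes one step further. It substitutes $\Atcomp=\big(I+\Delta\At D_{\At}(\At D_{\At})\pinv\big)\At$ into the Jacobi backward error $(I+E_L)\Atcomp(I+E_R)=(\wh{U}_J+\Delta\wh{U}_J)\wh{\Sigma}_J V_J\tp$ of Lemma~\ref{lem.bwd-err-analy-one-sided-J}. Note that $E_L$ also sits on the left, with $\tnorm{E_L}\le 3p_2u\scond(\At)$. It then multiplies on the right by $W\tp$, where $W$ is the orthogonal polar factor of $\Vt$. This exhibits $WV_J(:,k)$ as the exact $k$th right singular vector of a single two-sided multiplicative perturbation $(I+\Delta_L)A(I+\Delta_R)$, with $\tnorm{\Delta_L}=O(u\scond(\At))$ and $\tnorm{\Delta_R}=O(u)$. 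Theorem~\ref{thm.multi-perturbation-svecs} is therefore applied once, with the gap of $A$ itself.

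Your three-term split instead needs $\svgap(\Atcomp,k)\ge\svgap(A,k)/2$. The relative singular value perturbation from $A$ to $\Atcomp$ is roughly $(p_1+3\sqrt{mn})u\scond(\At)$, which can approach $2p_3u\scond(\At)$. The stated hypothesis $p_3u\scond(\At)\le\svgap(A,k)/6$ then does not guarantee that gap comparison, so your route would need a stronger hypothesis. Merging the perturbations removes the issue.

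The remaining $O(u)$ pieces of your argument match the paper. These are $\tnorm{\Vt-W}\le p_1u$, the $\Delta\wh{V}_J$ term, and the rounding in $\fl(\Vt\,\cdot)$. Your second term is more cleanly obtained from Lemma~\ref{lem.bwd-err-analy-one-sided-J} than from \eqref{eq.deve-bound-svecs}, since $\vh_k(\Atcomp)$ comes from normalized columns of the Jacobi iterate for $\Atcomp\tp$, not from accumulated rotations.
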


Here, the $1$ in the parenthesis highlights the error arising from orthogonality, whereas the other term is associated with the backward error of the computed SVD. 

\begin{remark} \label{rmk.approach-for-left-svecs}
    The bound~\eqref{eq.final-bound-for-svec} also holds for the left singular vectors by letting $v_k(A)$ in~\eqref{eq.def-svecerrk} be the $k$th left singular vector associated with the $k$th largest singular value, and letting $\vh_k(A)$ be the $k$th column of $\wh{U}$ in Algorithm~\ref{alg.mp-precond-onesided-Jacobi}. 
    The analysis in this section can then be repeated by using the original version of Theorem~\ref{thm.multi-perturbation-svecs}~\cite[Thm.~3.3]{eiip95}, which does not distinguish between right and left singular vectors.
\end{remark}

\subsection{Bounding the error of the computed singular vectors}\label{sec.boundingerrorsvecs} We decompose $\svecerrk$ using Lemma~\ref{lem.sin-triangle-ineqn} to obtain,
\begin{align} \label{eq.err-evec-init-decomp}
  \svecerrk
  & \le \sin \angle\big( v_k(A), \Vt \vh_k(\Atcomp) \big) + \sin \angle\big( \Vt \vh_k(\Atcomp), \fl(\Vt \vh_k(\Atcomp)) \big) \\
  & =: E_{1} + E_{2}. \notag 
\end{align}
In the following two sections, we derive upper bounds for $E_1$ and $E_2$, and hence $\svecerrk$. 

\subsubsection{Bounding \texorpdfstring{$E_{1}$}{E1}}
We use the polar decomposition $\Vt = WH$, where $W$ is orthogonal and $H$ is symmetric positive definite~\cite[chap.~8]{high08-FM}. For brevity, let $\wh{c} = \vh_k(\Atcomp)$. Then
by Lemma~\ref{lem.sin-triangle-ineqn},
\begin{equation}\label{eq.another-decomposition}
  \sin \angle\big( v_k(A), \Vt \wh{c} \big)
  \le \sin \angle\big( v_{k}(A), W \wh{c} \big)
   + \sin \angle\big( W \wh{c}, \Vt \wh{c} \big).
\end{equation}
The second term, $\sin \angle\big( W \wh{c}, \Vt \wh{c} \big)$, can be bounded using Lemma~\ref{lem.sin-and-norm-diff},
\begin{equation}\label{eq.sin-decomp-1}
  \sin \angle( W\wh{c}, \Vt \wh{c})
  \le \frac{\tnorm{W\wh{c} - \Vt\wh{c}}}{\tnorm{W\wh{c}}}
  \le \frac{\tnorm{W - \Vt}\tnorm{\wh{c}}}{\tnorm{\wh{c}}}
  = \tnorm{W - \Vt}
  \le p_1 u.
\end{equation}
The final inequality follows from the fact that the distance between $\Vt$ and its orthogonal polar factor is bounded in terms of the loss of orthogonality of $\Vt$, namely $\tnorm{\Vt\tp \Vt - I}$~\cite[Lem.~8.17]{high08-FM}, which is, by Algorithm~\ref{alg.mp-precond-onesided-Jacobi}, bounded above by $p_1 u$.

To bound the first term in \eqref{eq.another-decomposition}, $\sin \angle\big( v_{k}(A), W \wh{c} \big)$, we require the following perturbation result, which is a slight modification of Eisenstat and Ipsen~\cite[Thm.~3.3]{eiip95}.

\begin{theorem}
\label{thm.multi-perturbation-svecs}
Let $B \in \R\mn$, and let $\Delta B\in \R\mn$ be a perturbation such that%
\begin{equation}\notag
  B + \Delta B = (I + \Delta_L) B (I + \Delta_R),
\end{equation}
where $\max\{\tnorm{\Delta_L}, \tnorm{\Delta_R}\} < 1/6$.
Then for all $k = 1,\dots,n$
\begin{equation}
  \notag
  \sin\angle\big(v_k(B), v_k(B+\Delta B)\big)
  \le
  \frac{20\sqrt{2}\,\max\{\tnorm{\Delta_L},\tnorm{\Delta_R}\}}{\svgap(B,k)},
\end{equation}
provided that
$\max\{\tnorm{\Delta_L},\tnorm{\Delta_R}\}\le \svgap(B,k)/6$.
\end{theorem}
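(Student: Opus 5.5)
The statement is a slight modification of Eisenstat and Ipsen~\cite[Thm.~3.3]{eiip95}, so the plan is to reduce to that result rather than reprove it. Write $\delta := \max\{\tnorm{\Delta_L},\tnorm{\Delta_R}\}$. We need three things: (i) the exact hypotheses and conclusion of \cite[Thm.~3.3]{eiip95}, including its own definition of relative singular value gap; (ii) a comparison between that gap and $\svgap(B,k)$; (iii) a check that the constants $1/6$ and $20\sqrt{2}$ absorb the differences.

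First, recall the mechanism behind \cite{eiip95}, both to justify the constants and to handle the case $m\ge n$ directly. The multiplicative perturbation $B+\Delta B=(I+\Delta_L)B(I+\Delta_R)$ is split into two one-sided perturbations.
\begin{itemize}
\item A right-multiplicative factor $I+\Delta_R$ moves the right singular subspaces by a quantity governed by $\tnorm{\Delta_R}$ over a relative gap. This comes from a Sylvester-type equation in which the off-diagonal coupling is weighted by $\sigma_j/(\sigma_k\pm\sigma_j)$ rather than by the absolute gap.
\item A left-multiplicative factor $I+\Delta_L$ acts on $B\tp B$ only through $B\tp(I+\Delta_L)\tp(I+\Delta_L)B$. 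It perturbs the right singular vectors again with a relative-gap denominator.
\end{itemize}
Eisenstat and Ipsen bound $\sin\angle$ for each factor by a constant times $\tnorm{\Delta}$ divided by a relative gap of the form $\min_{j\ne k}|\sigma_k-\sigma_j|/\sigma_k$, capped appropriately. They then combine the two contributions with the sine triangle inequality, Lemma~\ref{lem.sin-triangle-ineqn}.

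The key steps, in order, are as follows.
\begin{enumerate}
\item Write $\widetilde B=(I+\Delta_L)B$, so that $B+\Delta B=\widetilde B(I+\Delta_R)$.
\item Bound $\sin\angle\bigl(v_k(B),v_k(\widetilde B)\bigr)$ by the left-perturbation result.
\item Bound $\sin\angle\bigl(v_k(\widetilde B),v_k(\widetilde B(I+\Delta_R))\bigr)$ by the right-perturbation result. This bound involves $\svgap(\widetilde B,k)$ rather than $\svgap(B,k)$.
\item Control $\svgap(\widetilde B,k)$ in terms of $\svgap(B,k)$. By the multiplicative Weyl-type bound $\sigma_j(\widetilde B)=\sigma_j(B)(1+\theta_j)$ with $|\theta_j|\le\delta$, each relative difference moves by $O(\delta)$. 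Under the proviso $\delta\le\svgap(B,k)/6$ with $\delta<1/6$, this gives $\svgap(\widetilde B,k)\ge c\,\svgap(B,k)$ for an explicit $c$ (something like $1/2$ or $2/3$).
\item Add the two bounds from steps 2 and 3 and track the constants, giving a total coefficient of $20\sqrt{2}$.
\end{enumerate}
The cap $2$ in the definition of $\svgap$ is what keeps these denominators well behaved when $\sigma_j\gg\sigma_k$.

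The main obstacle is bookkeeping of constants and the gap definition. The definition of relative gap in \cite{eiip95} may differ from $\svgap$, for instance by including $\sigma_j$ in the denominator or by being defined through $|\sigma_k^2-\sigma_j^2|$. If so, I would first prove an elementary inequality relating the two gaps, e.g.\ $|\sigma_k-\sigma_j|/\sigma_k \le 2\,|\sigma_k-\sigma_j|/(\sigma_k+\sigma_j)$ when $\sigma_j\le\sigma_k$, with the cap handling the opposite regime. I would then verify that the resulting factor, together with the gap-degradation factor from step 4, is at most $20\sqrt{2}$ divided by the original constant. Finally, the simplicity of the singular values and the proviso $\delta\le\svgap(B,k)/6$ are exactly what ensure that the $k$th singular vector of $B+\Delta B$ is matched with the $k$th one of $B$, so the ordering is preserved and the statement holds for each $k$ separately.
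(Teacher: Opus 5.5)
There is a genuine gap in steps 3--5. Once \cite[Thm.~3.3]{eiip95} is granted, the whole content of the statement is the constant $20\sqrt{2}$ under exactly the hypotheses $x<1/6$ and $x\le\svgap(B,k)/6$. Here $x:=\max\{\tnorm{\Delta_L},\tnorm{\Delta_R}\}$; your $\delta$ clashes with the $\delta$ in Eisenstat--Ipsen's bound. Your sequential route does not produce this constant.

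Writing $B+\Delta B=\widetilde B(I+\Delta_R)$ with $\widetilde B=(I+\Delta_L)B$ costs you in two ways.
\begin{enumerate}
\item With the natural bookkeeping, each one-sided application gives $20\sqrt2\,x$ over the relevant gap. Adding the two yields a coefficient $20\sqrt2(1+1/c)$, at least twice the target.
\item The proviso does not transfer to $\widetilde B$. From $(1-x)\sigma_j(B)\le\sigma_j(\widetilde B)\le(1+x)\sigma_j(B)$ you only get $\svgap(\widetilde B,k)\ge\bigl((1-x)\svgap(B,k)-2x\bigr)/(1+x)\ge\frac37\svgap(B,k)$. The factor $\frac37$ is sharp: take $\Delta_L=x(u_ku_k\tp-u_ju_j\tp)$, with $u_k,u_j$ left singular vectors of $B$, $\sigma_j(B)=2\sigma_k(B)$, $\svgap(B,k)=1$, and $x\uparrow1/6$. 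So $c$ is not $1/2$ or $2/3$. Moreover, in the second application $\alpha$ can equal $2x+x^2>\svgap(\widetilde B,k)/2$, leaving a denominator $\svgap(\widetilde B,k)-\alpha$ as small as about $0.07\,\svgap(B,k)$.
\end{enumerate}
Your account of \cite{eiip95} is also off. Their Theorem~3.3 is not a composition of one-sided results via Lemma~\ref{lem.sin-triangle-ineqn}. Also, $\svgap$, cap $2$ included, is exactly their gap, so no gap conversion (which would cost yet another factor) is needed.

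The missing step is to apply \cite[Thm.~3.3]{eiip95} once, to the two-sided perturbation. It is already a joint bound, $\sin\angle\big(v_k(B),v_k(B+\Delta B)\big)\le\sqrt2\bigl(\delta/(\svgap(B,k)-\alpha)+\beta\bigr)$, valid for $\alpha<\svgap(B,k)$, in which $\alpha,\beta,\delta$ are maxima over the left and right factors. What remains is bookkeeping:
\begin{enumerate}
\item $\alpha\le2x+x^2\le3x$, so $x\le\svgap(B,k)/6$ gives $\svgap(B,k)-\alpha\ge\svgap(B,k)/2$;
\item $\beta=x$;
\item a Neumann series with $3x<1/2$ gives $\delta\le(1+3x)\cdot3x/(1-3x)\le9x$;
\item $\svgap(B,k)\le2$ absorbs the additive term via $\sqrt2\beta\le2\sqrt2\beta/\svgap(B,k)$ (this is what the cap is used for).
\end{enumerate}
Together these give $2\sqrt2(9x+x)/\svgap(B,k)$.

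If you want to keep the sequential route, you need an additional idea: the claim is vacuous unless $20\sqrt2\,x<\svgap(B,k)$, because $\sin\angle\le1$. In that regime $\svgap(\widetilde B,k)\ge0.8\,\svgap(B,k)$. The sharper estimates $\alpha\le2x+x^2$ and $\delta\le(1+x)^2(2x+x^2)/(1-2x-x^2)$ then bound each piece by about $6\sqrt2\,x/\svgap(B,k)$, so the sum does fit under $20\sqrt2$. None of this appears in your proposal, and it is more work than the single two-sided application.
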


To make use of this theorem, we need the following backward error analysis of the one-sided Jacobi algorithm.
\begin{lemma}[{\cite[Thm.~2.1]{domo04}}]
    \label{lem.bwd-err-analy-one-sided-J}
     Let $\wh{V}_{J}\wh{\Sigma}_{J}\wh{U}_{J}\tp$ be a computed SVD of $\Atcomp\tp$ using step~\ref{li.compute-SVD} of Algorithm~\ref{alg.mp-precond-onesided-Jacobi}. Then there exists $\Delta \wh{U}_{J}$, $\Delta \wh{V}_{J}$, $E_{L}$ and $E_{R}$ such that
    \begin{align}
      \label{eq.Atcomp-jacobi-svd-bwd-error}
      (I + E_{L})\Atcomp(I + E_{R})
      &= (\wh{U}_{J} + \Delta\wh{U}_{J}) \wh{\Sigma}_{J} (\wh{V}_{J} + \Delta \wh{V}_{J})\tp,\\
      \label{eq.Atcomp-jacobi-bwd-multip}
      \tnorm{E_{L}} \le p_{2}u\scond(\Atcomp)
      &,\quad\tnorm{E_{R}}, \tnorm{\Delta \wh{U}_{J}}, \tnorm{\Delta \wh{V}_{J}} \le p_{2}u,
    \end{align}
    where $\wh{U}_{J} + \Delta \wh{U}_{J}$ and $\wh{V}_{J} + \Delta \wh{V}_{J}$ are orthogonal.
\end{lemma}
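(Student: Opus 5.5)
The lemma is quoted from \cite[Thm.~2.1]{domo04}, so the plan is to reconstruct the standard rounding error analysis of the one-sided Jacobi algorithm, following Demmel and Veseli\'{c}~\cite{deve92}, and to translate the result to the transposed setting of step~\ref{li.compute-SVD}. Write $X_0 = \Atcomp\tp$ and let $X_{s+1} = \fl(X_s \wh{J}_s)$, where $\wh{J}_s$ is the computed plane rotation at sweep step $s$, for $s = 0,\dots,S-1$.

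The steps, in order, are as follows.
\begin{enumerate}
\item \emph{Single rotation.} Applying one computed rotation to two columns in floating point is columnwise backward stable. There is an exact rotation $J_s$ with $\tnorm{\wh{J}_s - J_s} = O(u)$ such that
\[
  X_{s+1} = (X_s + \delta X_s) J_s, \qquad \tnorm{\delta X_s(:,j)} \le c\,u\,\tnorm{X_s(:,j)}.
\]
This bound is per column, and only the two columns touched by $\wh{J}_s$ change.
\item \emph{Accumulation.} Because rotations preserve column norms up to $O(u)$, these columnwise errors can be pulled back to $X_0$. The mechanism is the one in~\cite{deve92}: convergence in a bounded number of sweeps, with each column touched $O(n)$ times per sweep. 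This gives
\[
  X_S = (X_0 + \delta X)\,\mathcal{J}, \qquad \mathcal{J} = J_0 \cdots J_{S-1}, \qquad \tnorm{\delta X(:,j)} \le p\,u\,\tnorm{X_0(:,j)} .
\]
\item \emph{From columnwise to multiplicative.} Write $X_0 = B D$ with $D$ the diagonal of column norms, so $B$ has unit columns, and write $\delta X = F D$ with $\tnorm{F} \le p\sqrt{n}\,u$. Then
\[
  X_0 + \delta X = (I + F B\pinv)\, X_0, \qquad \tnorm{F B\pinv} \le p\sqrt{n}\,u\,\k(B).
\]
Transposing back to $\Atcomp = X_0\tp$, this factor becomes a multiplicative perturbation whose size involves the scaled condition number. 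It then has to be matched with $\scond(\Atcomp)$ and with the side on which $E_L$ sits in~\eqref{eq.Atcomp-jacobi-svd-bwd-error}.
\item \emph{Right singular vectors.} The computed accumulated product $\wh{V}_J$ of the $\wh{J}_s$ differs from the exact orthogonal matrix $\mathcal{J}$ by $O(Su)$ in norm. This gives the bound on $\Delta \wh{V}_J$, with $\wh{V}_J + \Delta \wh{V}_J = \mathcal{J}$.
\item \emph{Left singular vectors.} At termination, the stopping criterion~\cite[Eq.~1.1]{ztw26} makes the columns of $X_S$ orthogonal to relative accuracy $O(u)$. Normalizing them gives $\wh{\Sigma}_J$ and $\wh{U}_J$ with
\[
  X_S\tp = (I + G)(\wh{U}_J + \Delta\wh{U}_J)\wh{\Sigma}_J, \qquad \tnorm{G} = O(u).
\]
The residual factor $G$ comes from near-orthogonality and rounding in the normalization. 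Because the mismatch is columnwise relative to the columns of $X_S$, it can be moved through the orthogonal factor and absorbed into the well-conditioned side. That is the origin of the $O(u)$ bound on $E_R$, whereas only $E_L$ carries the condition number.
\end{enumerate}

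The main obstacle is step~3 together with its interaction with step~5: getting the $\k$ factor on the correct side. In the transposed formulation the columnwise errors naturally produce $(I + F B\pinv)X_0$. The scaling that appears must be the one defining $\scond(\Atcomp)$, and this is exactly why the paper applies Jacobi to $\Atcomp\tp$ (cf.\ Mathias~\cite{math95}). I would first verify carefully which scaling of $\Atcomp$ is inherited, including whether an intermediate $\max_s \k$ can be avoided because the errors are referred back to $X_0$. Only then would I assign the factors to $E_L$ and $E_R$. The remaining work is tracking the polynomial $p_2$, which is routine.
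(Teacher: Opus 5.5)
You have the right instinct about why Jacobi is applied to $\Atcomp\tp$, but steps~2--3 use the wrong error structure and cannot give $\scond(\Atcomp)$ on the correct side. (The paper itself gives no proof of this lemma; it quotes it from the cited reference.)

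\textbf{The gap.} Right multiplication by an orthogonal matrix preserves the \emph{row} norms of $X_s$. The column norms are exactly what the algorithm changes, since they converge to the singular values. So your step~2 claim, that columnwise errors can be pulled back to $X_0$ because rotations preserve column norms, is false. Columnwise errors measured against the \emph{current} iterate yield only $\max_s \k(B_s)$. That is the analogue of the quantity the paper's remark says is not known to be bounded by the initial scaled condition number.

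Even granting step~2, step~3 still fails. The columns of $X_0=\Atcomp\tp$ are the rows of $\Atcomp$, so step~3 would give the condition number of the \emph{row}-scaled $\Atcomp$. After transposition your factor $(I+FB\pinv)$ sits on the right of $\Atcomp$, i.e.\ in $E_R$, which contradicts $\tnorm{E_R}\le p_2u$. Also, for $m>n$ the identity $X_0+\delta X=(I+FB\pinv)X_0$ fails, because $B\pinv B\ne I_m$ for the wide $B$.

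Steps~4--5 swap roles as well. Since $\Atcomp\tp=\wh V_J\wh\Sigma_J\wh U_J\tp$, the accumulated rotations approximate $\wh U_J$. Normalizing the nearly orthogonal columns of $X_S$, which are vectors in $\R^n$, gives $\wh V_J$ and $\wh\Sigma_J$.

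\textbf{The missing idea} is the \emph{rowwise} backward error, which is why Jacobi is applied to $\Atcomp\tp$.
\begin{enumerate}
\item The standard analysis of a sequence of rotations applied on the right gives $X_S=(X_0+\delta X)\mathcal J$, with $\mathcal J$ exactly orthogonal and $\tnorm{\delta X(i,:)}\le pu\,\tnorm{X_0(i,:)}$ for every row $i$. Here $p$ grows only with the number of rotations, and there is no dependence on the scaling of the iterates, because row norms are invariant.
\item Transposed, this is a columnwise relative perturbation of $\Atcomp$. Let $D=\diag(\tnorm{\Atcomp(:,j)}\inv)$ and $B=\Atcomp D$, and write $\delta X\tp=FD\inv$ with $\tnorm{F}\le\sqrt n\,pu$.
\item Since $B$ is tall with full column rank, $B\pinv B=I_n$. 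Hence $\Atcomp+\delta X\tp=(I+FB\pinv)\Atcomp$, with $\tnorm{FB\pinv}\le\sqrt n\,pu\,\k(B)=\sqrt n\,pu\,\scond(\Atcomp)$, using $\tnorm{B}\ge 1$. This is $E_L$, and it refers only to the initial scaling.
\item The stopping criterion gives $X_S=(I+\Gamma)(\wh V_J+\Delta\wh V_J)\wh\Sigma_J$ with $\tnorm{\Gamma}=O(u)$, up to $m-n$ negligible columns when $m>n$. To see this, write the normalized columns as $I+\Gamma$ times their orthogonal polar factor.
\item Transposing $X_S=(X_0+\delta X)\mathcal J$ yields the lemma with $\wh U_J+\Delta\wh U_J=\mathcal J$ and $E_R=(I+\Gamma)^{-T}-I$.
\end{enumerate}
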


Note that the $k$th column of $\wh{V}_J$ is $\wh{c}$. Theorem~\ref{thm.multi-perturbation-svecs} bounds the error between two \textit{exact} singular vectors of two different matrices.
In contrast, the term $\sin\angle \big( v_k(A), W\wh{c} \big)$ in the bound~\eqref{eq.another-decomposition} is between one exact eigenvector and a computed one.
To bridge this gap, we further decompose this term as%
\begin{equation} \label{eq.further-decompse-bridge-gap} 
    \sin\angle \big( v_k(A), W\wh{c} \big)
    \le \sin\angle \big( v_k(A), WV_J(:,k) \big)+
    \sin \angle \big(W V_J(:,k), W\wh{c} \big),
\end{equation}
where $V_J = \wh{V}_J+\Delta \wh{V}_J$. 
To bound the first term, we require the next result, which reformulates \eqref{eq.Atcomp-jacobi-svd-bwd-error} as a multiplicative perturbation of $A$ instead of $\Atcomp$.

\begin{lemma}\label{lem.computed-jacobi-mult-per-A}
With the notation of Lemma~\ref{lem.bwd-err-analy-one-sided-J} and Algorithm~\ref{alg.mp-precond-onesided-Jacobi}, we have
$$
(I + \Delta_{L}) A (I + \Delta_{R})
       = (\wh{U}_{J} + \Delta\wh{U}_{J}) \wh{\Sigma}_{J} (W V_{J})\tp,
$$
where
    \begin{align}
      \label{eq.express-DeltaL}
      \Delta_{L}
      & = E_{L}+ \Delta\At D_{\At}(\At D_{\At})\pinv + E_L\Delta\At D_{\At}(\At D_{\At})\pinv,\\
      \label{eq.express-DeltaR}
      \Delta_{R}
      & = \Vt (I + E_{R}) W\tp - I,
    \end{align}
where $\Delta\At=\Atcomp-\At$ and 
$W$ is the orthogonal polar factor of $\Vt$.
\end{lemma}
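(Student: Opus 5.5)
The plan is to start from the backward error identity \eqref{eq.Atcomp-jacobi-svd-bwd-error} of Lemma~\ref{lem.bwd-err-analy-one-sided-J} and rewrite its left-hand side in terms of $A$ rather than $\Atcomp$. The first task is to express $\Atcomp$ as a left multiplicative perturbation of $\At$. Write $\Atcomp = \At + \Delta\At$, and let $D_{\At}$ be the diagonal matrix scaling the columns of $\At$ to unit norm. Since $\At$ has full column rank (from $\tnorm{\Vt\tp\Vt-I}<1/2$ and $A$ full rank), the matrix $\At D_{\At}$ has full column rank, so $(\At D_{\At})\pinv \At D_{\At} = I_n$. Hence
\[
\Delta\At = \Delta\At D_{\At} D_{\At}\inv = \Delta\At D_{\At}(\At D_{\At})\pinv \At D_{\At} D_{\At}\inv = \Delta\At D_{\At}(\At D_{\At})\pinv \At .
\]
This gives $\Atcomp = (I + F)\At$ with $F := \Delta\At D_{\At}(\At D_{\At})\pinv$.

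Next I would substitute this into the left-hand side of \eqref{eq.Atcomp-jacobi-svd-bwd-error}:
\[
(I+E_L)\Atcomp(I+E_R) = (I+E_L)(I+F)\At(I+E_R) = (I + E_L + F + E_LF)\,A\Vt(I+E_R).
\]
The left factor is exactly $I+\Delta_L$ with $\Delta_L$ as in \eqref{eq.express-DeltaL}. For the right factor, I would use the orthogonality of the polar factor $W$ of $\Vt$, which gives $W\tp W = I$. Then
\[
A\Vt(I+E_R) = A\,\Vt(I+E_R)W\tp W = A\bigl(I + \Delta_R\bigr)W,
\]
with $\Delta_R = \Vt(I+E_R)W\tp - I$ as in \eqref{eq.express-DeltaR}.

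Combining the two steps, we get $(I+\Delta_L)A(I+\Delta_R)W = (\wh{U}_J+\Delta\wh{U}_J)\wh\Sigma_J V_J\tp$. Multiplying on the right by $W\tp$ gives the right-hand side $(\wh{U}_J+\Delta\wh{U}_J)\wh\Sigma_J V_J\tp W\tp$, which equals $(\wh{U}_J+\Delta\wh{U}_J)\wh\Sigma_J (WV_J)\tp$. This is the claimed identity.

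The only delicate point is the pseudoinverse identity $(\At D_{\At})\pinv \At D_{\At} = I$. It requires $\At$ to have full column rank, and this follows from $\sigma_{\min}(\Vt)\ge(1-p_1u)^{1/2}>0$ together with $\operatorname{rank}(A)=n$. Beyond that, the lemma is pure algebra, and no norm bounds are needed at this stage; they enter later when $\tnorm{\Delta_L}$ and $\tnorm{\Delta_R}$ are estimated.
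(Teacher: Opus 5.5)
Your proof is correct and follows essentially the same route as the paper: write $\Atcomp = \bigl(I + \Delta\At D_{\At}(\At D_{\At})\pinv\bigr)\At$, substitute this into the backward error identity of Lemma~\ref{lem.bwd-err-analy-one-sided-J}, and right-multiply by $W\tp$. The only cosmetic difference is how you get the pseudoinverse factor: you obtain $D_{\At}(\At D_{\At})\pinv$ directly from $(\At D_{\At})\pinv \At D_{\At} = I$, whereas the paper first writes $\Atcomp = (I + \Delta\At \At\pinv)\At$ and then invokes the reverse-order law $D_{\At}\inv\At\pinv = (\At D_{\At})\pinv$.
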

\begin{proof}
Let us write $\Atcomp$ as a multiplicative perturbation of $\At$,%
\begin{equation}
    \notag
    \Atcomp = \At + \Delta \At = \big(I + \Delta \At (\At)\pinv \big) \At.
\end{equation}
Here we have used the fact that $A$ is full rank, so that $\At$ is full rank. In addition, in order to work with scaled condition number, we have
\begin{equation}\notag
  \Atcomp = \big(I + \Delta \At (\At)\pinv \big) \At = \big(I + \Delta\At D_{\At} D_{\At}\inv \At \pinv \big) \At
   = \big( I + \Delta\At D_{\At} (\At D_{\At}) \pinv \big) \At,
\end{equation}
where $D_{\At} = \diag(\tnorm{\at_i}\inv)$, and the last equality holds since $D_{\At}$ and $\At$ of full rank yields $D_{\At}\inv \At\pinv = (\At D_{\At})\pinv$~\cite{grev66}.
Substituting the multiplicative relationship between $\Atcomp$ and $\At$ into~\eqref{eq.Atcomp-jacobi-svd-bwd-error} gives the backward error for Algorithm~\ref{alg.mp-precond-onesided-Jacobi}:%
\begin{equation}\label{eq.bwd-err-analysis-II}
  (I + E_{L})\big( I + \Delta\At D_{\At} (\At D_{\At}) \pinv \big) \At (I + E_{R})
  = (\wh{U}_{J} + \Delta\wh{U}_{J}) \wh{\Sigma}_{J} V_J\tp.
\end{equation}
Finally, multiplying \eqref{eq.bwd-err-analysis-II} on the right by $W\tp$ gives
\begin{equation}\notag
  (I + E_{L})\big( I + \Delta\At D_{\At} (\At D_{\At}) \pinv \big) A\Vt (I + E_{R}) W\tp
  = (\wh{U}_{J} + \Delta\wh{U}_{J}) \wh{\Sigma}_{J} (W V_{J})\tp,
\end{equation}
where $\Delta_L$ and $\Delta_R$ are obtained by subtracting the terms from both sides of $A$ by $I$. 
\end{proof}

In order to use Theorem~\ref{thm.multi-perturbation-svecs}, let us bound $\tnorm{\Delta_L}$ and $\tnorm{\Delta_R}$, respectively. 
Taking the norm in \eqref{eq.express-DeltaL} gives
\begin{equation}
    \notag 
    \tnorm{\Delta_L} 
    \le \tnorm{E_L} + \tnorm{\Delta\At D_{\At}}\tnorm{(\At D_{\At})\pinv} + \tnorm{E_L} \tnorm{\Delta\At D_{\At}}\tnorm{(\At D_{\At})\pinv}.
\end{equation}
The following lemma gives a bound on $\tnorm{\Delta\At D_{\At}}\tnorm{(\At D_{\At})\pinv}$.
\begin{lemma}[{\cite[sect.~2.3~\&~2.4.2]{ztw26}}]
    If Assumption~\ref{ass.one-sided-Jacobi} holds, then 
    \begin{equation}\notag 
        \tnorm{\Delta\At D_{\At}} \tnorm{(\At D_{\At})\pinv}
        \le 3\sqrt{mn} u \scond(\At),
    \end{equation}
    and $\scond(\Atcomp) \le 3\scond(\At)$.
\end{lemma}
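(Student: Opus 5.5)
This lemma is quoted from \cite[sect.~2.3~\&~2.4.2]{ztw26}. I would reconstruct its proof in two parts. First I bound the rounding error $\Delta\At=\Atcomp-\At$ column by column. Then I convert that into the scaled quantity using the fact that $\At$ has nearly orthogonal structure relative to $A$.

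\emph{Column bound.} The computation of $\At = A\Vt$ in step~\ref{li.compute-Atcomp} has two stages, each contributing error to column $i$.
\begin{itemize}
\item The product formed at precision $\uh$ satisfies, by standard matrix multiplication analysis, $|\Athcomp - A\Vt| \le \gammah |A||\Vt|$. Column $i$ therefore has error of norm at most $\gammah \sqrt{n}\,\tnorm{A}\tnorm{\Vt}$.
\item Demotion to precision $u$ adds $|\Atcomp-\Athcomp|\le u|\Athcomp|$, which gives column error at most $u\tnorm{\wt a_i}$ plus higher-order terms.
\end{itemize}
The key step is comparing $\tnorm{A}\tnorm{\Vt}$ with $\tnorm{\wt a_i}$. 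Since $\Vt = WH$ with $\tnorm{H-I}\le p_1u$, the columns of $\Vt$ have norms in $[1-p_1u,1+p_1u]$. Hence
\[
\tnorm{\wt a_i}=\tnorm{A\Vt e_i}\ge \sigma_{\min}(A)(1-p_1u),
\]
so that
\[
\gammah\sqrt{n}\,\tnorm{A}\tnorm{\Vt}\le \gammah\sqrt n\,\k(A)\frac{1+p_1u}{1-p_1u}\tnorm{\wt a_i}.
\]
Assumption~\ref{it.gammah<u} makes this at most $\tfrac{\sqrt n}{4}u\tnorm{\wt a_i}$. Combining both stages, every column satisfies $\tnorm{\Delta\At e_i}\le c\sqrt{n}u\tnorm{\wt a_i}$ with a modest constant $c$. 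This is exactly the role of the $\k(A)$ in~\ref{it.gammah<u}: the high-precision error must be absorbed relative to the smallest possible column of $\At$.

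\emph{Scaling.} The columnwise bound gives $\tnorm{\Delta\At D_{\At} e_i}\le c\sqrt n u$, hence $\tnorm{\Delta\At D_{\At}}\le \|\Delta\At D_{\At}\|_F\le c\, n u$. Keeping track of $m$ instead (the demotion error is bounded entrywise by $u$ times entries and the product error by $\sqrt m$ factors in the norms) yields the stated $3\sqrt{mn}u$; I would just track constants to land on $3$. Multiplying by $\tnorm{(\At D_{\At})\pinv}$, and noting $\tnorm{\At D_{\At}}\le\sqrt n$ with $\sigma_{\max}(\At D_{\At})\ge1$, gives $\tnorm{(\At D_{\At})\pinv}\le \scond(\At)$. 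This proves the first claim.

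\emph{Second claim.} Write $\Atcomp D_{\At}=(I+F)\At D_{\At}$ with $F=\Delta\At D_{\At}(\At D_{\At})\pinv$. By~\ref{it.scond}, $\tnorm{F}\le 3\sqrt{mn}u\scond(\At)<1/5$. Singular values therefore change by factors in $[1-\tnorm F,1+\tnorm F]$, so $\k(\Atcomp D_{\At})\le \frac{1+1/5}{1-1/5}\scond(\At)$. Replacing $D_{\At}$ by the true column scaling $D_{\Atcomp}$ costs a factor $\max_i\tnorm{\wt a_i}/\tnorm{\Atcomp e_i}$ ratio squared at most $(1+c\sqrt n u)/(1-c\sqrt nu)$, which is near $1$ under~\ref{it.scond}. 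The product stays below $3$.

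The main obstacle is the careful constant bookkeeping in the column bound. The scaled condition-number arguments are routine.
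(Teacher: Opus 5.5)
Your proposal is correct. The paper gives no proof of this lemma; it imports it from \cite{ztw26}. Your reconstruction supplies every step that is needed:
\begin{itemize}
\item a columnwise error bound, in which Assumption~\ref{it.gammah<u} absorbs the precision-$\uh$ error $\gammah\sqrt n\,\tnorm{A}\tnorm{\Vt}$ against the smallest possible column norm $\sigma_{\min}(A)(1-p_1u)$ (the factor $(1-p_1u)/(4(1+p_1u)\k(A))$ in that assumption is designed for exactly this);
\item the Frobenius norm taken over the $n$ scaled columns;
\item the bound $\tnorm{(\At D_{\At})\pinv}\le\scond(\At)$, which follows from $\sigma_{\max}(\At D_{\At})\ge 1$;
\item the multiplicative form $\Atcomp D_{\At}=(I+F)\At D_{\At}$, followed by a diagonal rescaling.
\end{itemize}

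Two small clean-ups are worth making.

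First, your column bound already gives $\tnorm{\Delta\At D_{\At}}\le (n/4+\sqrt n+nu/4)u<\tfrac32 nu$. This is at most $3\sqrt{mn}\,u$ because $n\le\sqrt{mn}$ when $m\ge n$. So there is nothing further to ``track with $m$''.

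Second, the rescaling factor is $\k(D_{\At}\inv D_{\Atcomp})=\max_i g_i/\min_i g_i$, where $g_i=\tnorm{\at_i}/\tnorm{\Atcomp e_i}$; it is not a ratio squared. Your bound $(1+c\sqrt n u)/(1-c\sqrt n u)$ does control it correctly. It is worth making the ``near $1$'' claim explicit. Assumption~\ref{it.scond} with $\scond(\At)\ge 1$ gives $u\le\sqrt n\,u<1/(16m)\le 1/16$, hence $c\sqrt n u<1/10$. The total factor is then below $\tfrac32\cdot\tfrac{11}{9}<3$, which proves $\scond(\Atcomp)\le 3\scond(\At)$.
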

Furthermore, Assumption~\ref{it.scond} ensures that $\tnorm{\Delta\At D_{\At}}\tnorm{(\At D_{\At})\pinv} \le 1$, and, together with the above lemma, gives 
\begin{align}
    \notag 
    \tnorm{\Delta_L}
    & \le 2\tnorm{E_L}+\tnorm{\Delta\At D_{\At}}\tnorm{(\At D_{\At})\pinv} \\
    \notag 
    & \le 2p_2u\scond(\Atcomp)+3\sqrt{mn}u\scond(\At) \\
    \notag 
    & \le (6p_2+3\sqrt{mn})u \scond(\At).
\end{align}

On the other hand, 
since $W$ is orthogonal, we have%
\begin{equation}\notag 
    \tnorm{\Delta_{R}}
    = \tnorm{\Vt(I+E_{R})W\tp-I}  
    = \tnorm{\Vt(I+E_{R}) - W}  
    \le \tnorm{\Vt-W} + \tnorm{\Vt}\tnorm{E_{R}}.
\end{equation}
Using~\eqref{eq.sin-decomp-1}, \eqref{eq.Atcomp-jacobi-bwd-multip}, and $\tnorm{\Vt} \le 1+p_1u$, which is implied by $\tnorm{\Vt\tp\Vt-I} \le p_1u$~\cite[Lem.~2.5]{ztw26}, we have%
\begin{equation}
    \notag 
    \tnorm{\Delta_{R}} \le p_1u + (1+p_1u)p_2u
    = (p_1 + p_2 + p_1p_2u)u
    \le (p_1 + p_2 + p_1p_2u)u \scond(\At).
\end{equation}
By taking
\begin{equation}
    \label{eq.def-of-p3}
    p_3 = \max\{6p_2+3\sqrt{mn}, p_1+p_2+p_1p_2u \},
\end{equation}
and using Theorem~\ref{thm.multi-perturbation-svecs} gives
\begin{equation}\label{eq.sin-decomp-3}
  \sin \angle \big( v_{k}(A), W V_{J}(:,k)\big)
  \le \frac{20\sqrt{2} p_{3} u\scond(\At)}{\svgap(A,k)}. 
\end{equation}

The second term in~\eqref{eq.further-decompse-bridge-gap} can be bounded directly using Lemma~\ref{lem.sin-and-norm-diff} and~\eqref{eq.Atcomp-jacobi-bwd-multip},
\begin{equation}\label{eq.sin-decomp-2}
  \sin \angle \big( W V_{J}(:,k), W \wh{c} \big)
  = \sin \angle \big( V_{J}(:,k), \wh{c} \big)
  \le \frac{\tnorm{\Delta \wh{V}_{J}(:,k)}}{\tnorm{V_{J}(:,k)}}
  \le p_{2}u. 
\end{equation}

Finally, combining \eqref{eq.sin-decomp-1},~\eqref{eq.sin-decomp-2} and~\eqref{eq.sin-decomp-3} gives
\begin{equation}\notag
  E_{1} \le p_{1}u + p_{2}u +
  \frac{20\sqrt{2}p_{3}u\scond(\At)}{\svgap(A,k)}. 
\end{equation}

\subsubsection{Bounding \texorpdfstring{$E_{2}$}{E2}}\label{sec.bounding-e_2}

We can bound the second term in \eqref{eq.err-evec-init-decomp} using Lemma~\ref{lem.sin-and-norm-diff},
\begin{equation}
  \notag
  \sin \angle\big( \Vt \vh_k(\Atcomp), \fl(\Vt \vh_k(\Atcomp)) \big)
  \le \frac{\tnorm{\Vt \vh_k(\Atcomp) - \fl\big(\Vt \vh_k(\Atcomp)\big)}}{\tnorm{\Vt \vh_k(\Atcomp)}}.
\end{equation}
Using the error analysis for the matrix--vector product~\cite[p.~70]{high02-ASNA2}, the numerator is bounded by $n^{1/2} \gamma \tnorm{\Vt} \tnorm{\vh_k(\Atcomp)}$. Substituting this back into the above upper bound, we have 
\begin{equation}
  \label{eq.err-evec-1}
  E_{2}
  \le n^{1/2} \gamma \frac{\tnorm{\Vt} \tnorm{\vh_k(\Atcomp)}}{\tnorm{\Vt \vh_k(\Atcomp)}}
  \le n^{1/2} \gamma \k(\Vt).
\end{equation}
Since $\tnorm{\Vt\tp\Vt-I} < 1/2$ by assumption, we have $\kappa(\Vt) \le \sqrt{3}$~\cite[Prop.~4.3]{stwu02}. 
In addition, $\gamma = nu/(1-nu) < 1$ implies $nu < 1/2$, which leads to $\gamma < 2nu$. Together, we have $E_2 \le 2\sqrt{3}n^{3/2}u$.

\section{Accuracy of the eigenvectors}
\label{sec.accuracy-of-evec}
In this section, we prove in Theorem~\ref{thm.main-thm-evec} that the two-sided Jacobi algorithm proposed in~\cite{htwz25} applied to a symmetric positive definite matrix computes eigenvectors with high accuracy.

\subsection{Two-sided Jacobi and assumptions}
The mixed-precision two-sided Jacobi algorithm for eigenvectors is the algorithm proposed in~\cite{htwz25}.

\begin{algorithm}[tbhp!]
\caption{Mixed-precision preconditioned Jacobi algorithm~\cite[Alg.~1]{htwz25}.}
\label{alg.mp-precond-Jacobi}
\begin{algorithmic}[1]
\Require{A symmetric matrix $A \in \R\nn$, two precisions $u$ and $\uh$ with $0 < \uh < u$, and a preconditioner $\Qt \in \R\nn$ such that $\tnorm{\Qt\tp\Qt - I} \le p_{1}u < 1/2$.}
\Ensure{A computed spectral decomposition $\wh{Q}\wh{\Lambda}\wh{Q}\tp$ of $A$.}
\State{Compute the preconditioned matrix $\At$ by computing the product $\Qt\tp A\Qt$ entirely at precision $\uh$, which gives $\Athcomp$, and then demoting it to precision $u$ to obtain $\Atcomp$.}
\State{Compute a spectral decomposition $\wh{Q}_J\wh{\Lambda}_{J}\wh{Q}_{J}\tp$ of $\Atcomp$ using the Jacobi algorithm with stopping criterion~\cite[Eq.~1.1]{htwz25} at precision $u$.}
\State{Construct the eigenvector matrix $\wh{Q} = \Qt \wh{Q}_J$ at precision $u$, and set $\wh{\Lambda} = \wh{\Lambda}_{J}$.}
\end{algorithmic}
\end{algorithm}

As in~\cite{htwz25}, we assume that the following assumptions hold.
\begin{assumption}\label{ass.two-sided-Jacobi}
    Let $u,\uh,\gamma$ and $\gammah$ be as in \eqref{eq.gamma-gammah}, and let 
    $A, \At \in \R\nn$ and $p_1$ be as in
    Algorithm~\ref{alg.mp-precond-Jacobi}. We assume
    \begin{enumerate}[label=\textup{(B\arabic*)}]
    \item \label{it.ass.nukappa}
    $10n^{3/2}u (1-p_1u)^{-1} \kappa_2(A) < 1$,
    \item \label{it.ass-size-gammah}
    $\gammah <  \frac{u(1-p_1u)}{16n^{1/2}\kappa_2(A)}$,
    \item \label{it.c(n,u)kappa<1/2}
    $14nu\scondt(\At) < 1$.
    \end{enumerate}
    Additionally, we assume that the eigenvalues of $A$, $\At$ and $\Atcomp$ are simple and positive.
\end{assumption}

Similar to the definition of $\svecerrk$ in equation ~\eqref{eq.def-svecerrk}, the quantity of interest is
\begin{equation}\notag
  \evecerrk := \sin \big( v_{k}(A), \vh_k(A) \big),
\end{equation}
where $v_k(A) = q_k(A)$ and $\vh_k(A) = \fl\big(\Qt \qh_{k}(\Atcomp) \big)$. Here, $q_k(A)$ denotes the eigenvector of $A$ associated with its $k$th largest eigenvalue and $\wh{q}_k(\Atcomp)$ denotes the computed eigenvector of $\Atcomp$ associated with its $k$th largest computed eigenvalue.

Our main result on the error for the computed eigenvectors is the following theorem. The proof is contained in section~\ref{sec.bound-error-comp}.

\begin{theorem}
\label{thm.main-thm-evec}
Let $A, \At \in \R\nn$ be the matrices defined in Algorithm~\ref{alg.mp-precond-Jacobi}, with $A$ positive definite. If Assumption~\ref{ass.two-sided-Jacobi}, \eqref{eq.assumption-evgap} and \eqref{eq.ass-on-eta-evgap} hold, then
\begin{equation}\notag
  \evecerrk \le p_{10}u \left(  1 + \frac{\scondt(\At)}{\evgap(A,k)} \right).
\end{equation}
\end{theorem}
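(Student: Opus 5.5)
The proof mirrors the one for Theorem~\ref{thm.main-thm-svec}. First, by Lemma~\ref{lem.sin-triangle-ineqn} we split
\[
\evecerrk \le \sin\angle\big(q_k(A), \Qt\qh_k(\Atcomp)\big) + \sin\angle\big(\Qt\qh_k(\Atcomp), \fl(\Qt\qh_k(\Atcomp))\big) =: E_1 + E_2 .
\]
The term $E_2$ is handled exactly as in section~\ref{sec.bounding-e_2}. Lemma~\ref{lem.sin-and-norm-diff} and the matrix--vector product error bound give $E_2 \le n^{1/2}\gamma\,\k(\Qt) \le 2\sqrt{3}\,n^{3/2}u$, since $\tnorm{\Qt\tp\Qt-I}<1/2$.

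For $E_1$, write the polar decomposition $\Qt = WH$ with $W$ orthogonal and set $\wh c = \qh_k(\Atcomp)$. Then
\[
E_1 \le \sin\angle\big(q_k(A), W\wh c\big) + \tnorm{W-\Qt},
\]
and the second term is at most $p_1u$ by \cite[Lem.~8.17]{high08-FM}. Next I use the backward error result for the two-sided Jacobi algorithm (Demmel--Veseli\'c / \cite{htwz25}). It states that $\wh Q_J\wh\Lambda_J\wh Q_J\tp$ is the exact spectral decomposition of a congruence perturbation $(I+E)\tp\Atcomp(I+E)$, up to a small perturbation of the eigenvector matrix. More precisely, $Q_J = \wh Q_J + \Delta\wh Q_J$ is orthogonal with $\tnorm{\Delta\wh Q_J}\le p u$, and $\tnorm{E}\le p\,u\,\scondt(\Atcomp)$.

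As in Lemma~\ref{lem.computed-jacobi-mult-per-A}, I then push the perturbation back onto $A$. First write $\Atcomp = \At + \Delta\At$. With $D=\diag(\at_{ii}^{-1/2})$ this is
\[
\Atcomp = D^{-1}\big(D\At D + D\Delta\At D\big)D^{-1} = (I+F)\tp \At (I+F),
\]
where $F$ satisfies $\tnorm{F} \lesssim \tnorm{D\Delta\At D}\,\tnorm{(D\At D)^{-1}} \le c(n)\,u\,\scondt(\At)$. Such an $F$ can be taken as $F = \At^{-1/2}$-type square-root factor, or via Cholesky of $I + (D\At D)^{-1/2}D\Delta\At D(D\At D)^{-1/2}$ conjugated back; the componentwise bound on $\Delta\At$ from \cite{htwz25} and Assumption~\ref{it.c(n,u)kappa<1/2} make this a small perturbation. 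I also need $\scondt(\Atcomp)\le c\,\scondt(\At)$, which is available from \cite{htwz25}. Since $\At = \Qt\tp A\Qt = H W\tp A W H$, we get
\[
W Q_J \wh\Lambda_J (WQ_J)\tp = (I+G)\tp A (I+G), \qquad I+G = W H (I+F)(I+E) W\tp .
\]
Using $\tnorm{H-I}\le p_1u$ gives $\tnorm{G} \le p_3' u\,\scondt(\At)$.

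Now apply the eigenvector analogue of Theorem~\ref{thm.multi-perturbation-svecs}, namely the Eisenstat--Ipsen bound for multiplicative perturbations $(I+G)\tp A(I+G)$ of a positive definite matrix. It gives
\[
\sin\angle\big(q_k(A), WQ_J(:,k)\big) \le c\,\tnorm{G}/\evgap(A,k)
\]
provided $\tnorm{G}$ is small relative to $\evgap(A,k)$. This smallness requirement is presumably the role of \eqref{eq.assumption-evgap} and \eqref{eq.ass-on-eta-evgap}. If necessary, I would derive it from Theorem~\ref{thm.multi-perturbation-svecs} applied to a Cholesky factor $B$ with $A=B\tp B$, since right singular vectors of $B(I+G)$ are eigenvectors of the perturbed matrix. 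This route requires converting $\svgap$ of $B$ into $\evgap$ of $A$; since $\sigma_j(B)^2=\lambda_j(A)$, the two relative gaps are comparable up to constants. Finally, $\sin\angle(WQ_J(:,k),W\wh c)\le p u$. Summing all the pieces yields the stated bound with a suitable $p_{10}$.

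The main obstacle is the multiplicative reformulation: expressing $\Atcomp$ as a two-sided congruence perturbation of $\At$ with norm governed by $\scondt(\At)$ rather than $\k(\At)$, and matching the correct form of the eigenvector perturbation theorem and its gap conditions. I would first try the Cholesky reduction to Theorem~\ref{thm.multi-perturbation-svecs}, since that result is already available in the paper.
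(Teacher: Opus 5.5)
\textbf{There is a genuine gap: the step that fails is the eigenvector bound you invoke for the congruence.} You claim $\sin\angle(q_k(A),\Qb Q_J(:,k))\le c\tnorm{G}/\evgap(A,k)$ for $(I+G)\tp A(I+G)$, and this is false in general. If $I+G$ is a small rotation, the eigenvectors of $(I+G)\tp A(I+G)$ are $(I+G)\tp q_j(A)$. The angle can then be of order $\tnorm{G}$ no matter how large $\evgap(A,k)$ is.

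This is why the Eisenstat--Ipsen bound \cite[Thm.~2.2]{eiip95} has an ungapped term of size $\tnorm{G}$ in addition to $c\tnorm{G}/\rho(A,k)$. It is also why Lemma~\ref{lem.relation-rho-evgap} only gives $1/\rho(A,k)\le 1+1/\evgap(A,k)$. Your Cholesky fallback does no better. Since $\svgap\le 2$ while $\evgap$ is unbounded, only $1/\svgap(B,k)\le c\,(1+1/\evgap(A,k))$ holds. (Theorem~\ref{thm.multi-perturbation-svecs} absorbs its ungapped term precisely because $\svgap\le 2$.) Your single $G$ has size $c\,u\,\scondt(\At)$, so this route yields at best $\evecerrk\le c\,u\,\scondt(\At)(1+1/\evgap(A,k))$. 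The extra term $u\,\scondt(\At)$ is not dominated by $p_{10}u(1+\scondt(\At)/\evgap(A,k))$ when $\evgap(A,k)\gg 1$, and no hypothesis bounds $\evgap(A,k)$ from above.

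\textbf{The missing idea is to keep the two perturbations apart, as the paper does.} Eisenstat--Ipsen is applied only to $A\mapsto \Qb\Qt\tp A\Qt\Qb\tp$, a congruence of size $O(p_1u)$. There the ungapped term and the passage from $\rho$ to $\evgap$ cost only $O(u)$. The $\scondt$-dependent step compares $q_k(\At)$ with $Q_J(:,k)$ using the \emph{additive}, componentwise scaled backward error $\At+\wt\Delta\At=Q_J\wh\Lambda_J Q_J\tp$ with $|(\wt\Delta\At)_{ij}|\le p_8u\sqrt{\at_{ii}\at_{jj}}$. It then applies Demmel--Veseli\'c \cite[Thm.~2.5]{deve92}, whose bound $\scondt(\At)\eta/\evgap(\At,k)$ has no ungapped term. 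Lemma~\ref{lem.evgapAt-evgapA}, via \eqref{eq.assumption-evgap}, replaces $\evgap(\At,k)$ by $\evgap(A,k)$, and \eqref{eq.ass-on-eta-evgap} controls the $O(\eta^2)$ remainder.

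\textbf{A secondary problem is that your conversion to a congruence is not justified.} It also throws away the additive structure the paper relies on. Let $M=D\At D$ and let $C$ be the square root or Cholesky factor of $I+M^{-1/2}\,D\,\Delta\At\, D\,M^{-1/2}$. Conjugating back gives $F=D\,M^{-1/2}(C-I)M^{1/2}D\inv$. For a diagonal, strongly graded $\At$ (so $\scondt(\At)=1$) this can have norm of order $u\,\k(D)$, which is exactly the grading the scaled condition number is meant to remove. Likewise, Lemma~\ref{lem.bwd-error-two-sided-jacobi} and the stopping criterion give an additive, componentwise scaled $\Delta\Atcomp$. They do not give a congruence $(I+E)\tp\Atcomp(I+E)$ with $\tnorm{E}\le p\,u\,\scondt(\Atcomp)$.
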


\subsection{Relative gaps for eigenvalues}\label{sec.relat-gaps-eigenv}
We will first give a relationship between two different definitions of the relative gap for eigenvalues: the relative gap defined in~\eqref{eq.evgap-deve} which is used in~\cite{bade90} and \cite{deve92}, and the following definition~\cite{eiip95}
\begin{equation}\notag
  \rho(A,i) := \min_{j\neq i} \frac{|\lambda_{j}(A) - \lambda_{i}(A)|}{|\lambda_{i}(A)|}, 
\end{equation}
with the convention that $\rho(A,i) = \infty$ if $\lambda_i(A) = 0$.
\begin{lemma}\label{lem.relation-rho-evgap}
Let $A \in \R\nn$ be symmetric positive definite with simple eigenvalues, then
\begin{equation}\notag
  \frac{1}{\rho(A,i)} \le 1+\frac{1}{\evgap(A,i)}.
\end{equation}
\end{lemma}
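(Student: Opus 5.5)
It suffices to show that for every $j\neq i$
\[
\frac{\lambda_i(A)}{|\lambda_j(A)-\lambda_i(A)|} \le 1+\frac{\sqrt{\lambda_i(A)\lambda_j(A)}}{|\lambda_i(A)-\lambda_j(A)|}.
\]
Taking the maximum over $j\neq i$ on the left gives $1/\rho(A,i)$. On the right, maximizing each $\sqrt{\lambda_i\lambda_j}/|\lambda_i-\lambda_j|$ over $j$ gives $1/\evgap(A,i)$, so the right-hand side is bounded by $1+1/\evgap(A,i)$. Simplicity and positivity of the eigenvalues ensure that all denominators are nonzero and all square roots are real.

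To prove the pointwise inequality, write $\lambda_i=a^2$ and $\lambda_j=b^2$ with $a,b>0$ and $a\neq b$. Multiplying through by $|a^2-b^2|$, the claim becomes
\[
a^2 \le |a^2-b^2| + ab.
\]
We split into two cases. If $b>a$, then $|a^2-b^2|+ab = b^2-a^2+ab \ge ab > a^2$. If $b<a$, then $|a^2-b^2|+ab = a^2-b^2+ab = a^2+b(a-b) > a^2$. In both cases the inequality holds, in fact strictly.

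No step is a real obstacle; the only point needing care is that the minimum in $\rho$ and the minimum in $\evgap$ may be attained at different indices $j$. Handling this by bounding pointwise in $j$, and then bounding each term on the right by its maximum over $j$, resolves it.
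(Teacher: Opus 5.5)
Your proof is correct and follows essentially the same route as the paper, which also argues termwise in $j$ and reduces the claim to the elementary inequality $|1-x^2|+x\ge 1$ with $x=\sqrt{\lambda_j/\lambda_i}>0$; this is exactly your $a^2\le|a^2-b^2|+ab$ after dividing by $a^2$. Your two-case verification supplies the step the paper leaves as ``readily confirmed.''
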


\begin{proof}
Write $\lambda_k = \lambda_k(A)$ for any index $k$. Then by definition,
\begin{eqnarray*}
   1+\frac{1}{\evgap(A,i)} &=& \max_{j\neq i} \, 1 + \frac{\sqrt{\lambda_i\lambda_j}}{|\lambda_i-\lambda_j|} = \max_{j\neq i} \frac{\lambda_i}{|\lambda_i - \lambda_j|} \left(\left|1-\frac{\lambda_j}{\lambda_i}\right| + \sqrt{\frac{\lambda_j}{\lambda_i}} \right).
\end{eqnarray*}
One can readily confirm that $|1-x^2|+x \geq 1$ for all $x > 0$, so the bracketed term is at least 1 for any $i$ and $j$. Therefore,
\begin{equation*}
    1+\frac{1}{\evgap(A,i)} \geq \max_{j\neq i} \frac{\lambda_i}{|\lambda_i - \lambda_j|} = \frac{1}{\rho(A,i)},
\end{equation*}
as required.
\end{proof}

This lemma creates a bridge between different perturbation theories for eigenvectors~\cite{deve92}~\cite{eiip95}. The next result characterizes the change in $\evgap$ under a congruence transformation. 

\begin{lemma}\label{lem.evgapAt-evgapA}
Let $\At = \Qt\tp A\Qt$ where $A$ is symmetric positive definite with simple eigenvalues, and $\Qt$ is as in Algorithm \ref{alg.mp-precond-Jacobi}. Assume
\begin{equation}\label{eq.assumption-evgap}
  p_{1}u\bigg(1+\frac{2}{\evgap(A,i)} \bigg) \le \frac{1}{2},
\end{equation}
then
\begin{equation}\notag
  \frac{1}{\evgap(\At,i)} \le \frac{2(1+p_{1}u)}{\evgap(A,i)}.
\end{equation}
\end{lemma}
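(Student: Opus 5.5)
The plan is to compare the eigenvalues of $\At = \Qt\tp A \Qt$ with those of $A$ through Ostrowski's theorem, and then bound each pairwise relative gap of $\At$ from below.

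\textbf{Step 1 (Ostrowski).} Since $\tnorm{\Qt\tp\Qt - I}\le p_1u<1/2$, every eigenvalue of $\Qt\tp\Qt$, and hence every squared singular value of $\Qt$, lies in $[1-p_1u,\,1+p_1u]$. In particular $\Qt$ is nonsingular. Ostrowski's theorem for congruences then gives, with the same index ordering,
\[
\lambda_i(\At)=\theta_i\lambda_i(A),\qquad \theta_i\in[\sigma_{\min}(\Qt)^2,\sigma_{\max}(\Qt)^2]\subset[1-p_1u,\,1+p_1u].
\]
Since the indexing is preserved, the $i$th eigenvalue of $\At$ corresponds to the $i$th eigenvalue of $A$.

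\textbf{Step 2 (pairwise gap).} Fix $j\ne i$ and write $\lambda_k=\lambda_k(A)$ and $g_{ij}=|\lambda_i-\lambda_j|/\sqrt{\lambda_i\lambda_j}$. For the numerator, the triangle inequality gives
\[
|\theta_i\lambda_i-\theta_j\lambda_j|\ge|\lambda_i-\lambda_j|-|\theta_i-1|\lambda_i-|\theta_j-1|\lambda_j\ge|\lambda_i-\lambda_j|-p_1u(\lambda_i+\lambda_j).
\]
For the denominator, $\sqrt{\theta_i\theta_j\lambda_i\lambda_j}\le(1+p_1u)\sqrt{\lambda_i\lambda_j}$.

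The key elementary observation is
\[
\frac{\lambda_i+\lambda_j}{\sqrt{\lambda_i\lambda_j}} = 2+\frac{(\sqrt{\lambda_i}-\sqrt{\lambda_j})^2}{\sqrt{\lambda_i\lambda_j}}\le 2+g_{ij},
\]
which holds because $(\sqrt a-\sqrt b)^2\le|\sqrt a-\sqrt b|(\sqrt a+\sqrt b)=|a-b|$. Combining these bounds gives
\[
\frac{|\lambda_i(\At)-\lambda_j(\At)|}{\sqrt{\lambda_i(\At)\lambda_j(\At)}}\ge\frac{g_{ij}-p_1u(g_{ij}+2)}{1+p_1u}=\frac{g_{ij}\bigl(1-p_1u(1+2/g_{ij})\bigr)}{1+p_1u}.
\]

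\textbf{Step 3 (use \eqref{eq.assumption-evgap}).} We have $g_{ij}\ge\evgap(A,i)$, so $p_1u(1+2/g_{ij})\le p_1u(1+2/\evgap(A,i))\le 1/2$. The pairwise relative gap of $\At$ is therefore at least
\[
\frac{g_{ij}}{2(1+p_1u)}\ge\frac{\evgap(A,i)}{2(1+p_1u)}.
\]
Taking the minimum over $j\ne i$ gives $\evgap(\At,i)\ge\evgap(A,i)/(2(1+p_1u))$, which is the claimed inequality after inversion.

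The main obstacle is the numerator estimate in Step 2: the perturbation term $p_1u(\lambda_i+\lambda_j)$ must be controlled relative to $\sqrt{\lambda_i\lambda_j}$ even when $\lambda_i$ and $\lambda_j$ differ greatly. The identity $(\lambda_i+\lambda_j)/\sqrt{\lambda_i\lambda_j}\le 2+g_{ij}$ resolves this and explains the exact form of \eqref{eq.assumption-evgap}. A secondary point to check is that Ostrowski's theorem preserves the ordering of the eigenvalues, so that index $i$ refers to the same eigenvalue for $A$ and $\At$.
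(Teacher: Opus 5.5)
Your proof is correct and follows the paper's argument essentially step for step. Both use an Ostrowski-type result (the paper cites Demmel's Thm.~5.6) to get $\lambda_k(\At)=\lambda_k(A)(1+\theta_k)$ with $|\theta_k|\le p_1u$, the same numerator and denominator bounds, the same inequality $(\lambda_i+\lambda_j)/\sqrt{\lambda_i\lambda_j}\le 2+|\lambda_i-\lambda_j|/\sqrt{\lambda_i\lambda_j}$, and the same use of the assumption to make the bracketed factor at least $1/2$. The only cosmetic difference is that you justify that inequality via $(\sqrt{a}-\sqrt{b})^2\le|a-b|$, whereas the paper verifies it by squaring both sides.
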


\begin{proof}
Write $\lambda_k = \lambda_k(A)$ and $\wt\lambda_k = \lambda_k(\At)$ for any index $k$. By~\cite[Thm.~5.6]{demm97-ANLA}, there exists $\theta_k$ such that $\wt\lambda_k = \lambda_k(1+\theta_k)$, where $|\theta_k| \le \tnorm{\Qt\tp\Qt-I}\le p_1 u$. Hence, for any $j \neq i$, 
\begin{equation}
    \notag 
    |\wt\lambda_i - \wt\lambda_j| \ge |\lambda_i - \lambda_j| - p_1 u (\lambda_i + \lambda_j), \quad 
    \sqrt{\wt\lambda_i \wt\lambda_j} \le (1 + p_1 u)\sqrt{\lambda_i \lambda_j}.
\end{equation}
Since $(\lambda_i + \lambda_j)/\sqrt{\lambda_i \lambda_j} \le 2 + |\lambda_i - \lambda_j|/\sqrt{\lambda_i \lambda_j}$, as squaring both sides shows that the square of the right-hand side exceeds that of the left-hand side by $4|\lambda_i-\lambda_j|/\sqrt{\lambda_i\lambda_j}\ge 0$, we have
\begin{equation}
    \notag 
    \frac{|\wt\lambda_i - \wt\lambda_j|}{\sqrt{\wt\lambda_i \wt\lambda_j}} \ge
    \frac{1}{1+p_1u} \frac{|\lambda_i - \lambda_j|}{\sqrt{\lambda_i \lambda_j}}
    \bigg( 1 - p_1 u \Big( 1+ \frac{2}{|\lambda_i - \lambda_j|/\sqrt{\lambda_i \lambda_j}} \Big) \bigg).
\end{equation}
Since $\evgap(A,i) \le |\lambda_i - \lambda_j|/\sqrt{\lambda_i \lambda_j}$ for any $j\neq i$, and the Assumption~\eqref{eq.assumption-evgap} implies the bracketed term is at least $1/2$, we have 
\begin{equation}
    \notag 
    \frac{|\wt\lambda_i - \wt\lambda_j|}{\sqrt{\wt\lambda_i \wt\lambda_j}} \ge
    \frac{\evgap(A,i)}{2(1+p_1u)}.
\end{equation}
Taking the minimum over all $j\neq i$ and taking the reciprocals completes the proof.
\end{proof}

\subsection{Bounding the error of the computed eigenvectors}
\label{sec.bound-error-comp}
Let us start with the backward error analysis of the two-sided Jacobi algorithm.
\begin{lemma}[{\cite[Thm.~3.3]{drma20a}}]
\label{lem.bwd-error-two-sided-jacobi}
Let $\wh{Q}_{J}\wh{\Lambda}_{J}\wh{Q}_{J}\tp$ be the computed spectral decomposition of the symmetric positive definite matrix $A$ using the two-sided Jacobi algorithm. Then there exists a backward error $\Delta A$ and an orthogonal matrix $Q_{J}$ such that
\begin{equation}\notag
  A + \Delta A = Q_{J} \wh{\Lambda}_{J} Q_{J}\tp, \qquad
  \tnorm{\Delta A}/\tnorm{A} \le p_{5} u, \qquad \tnorm{\wh{Q}_{J}-Q_{J}} \le p_{5}u.
\end{equation}
\end{lemma}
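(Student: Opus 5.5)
This lemma is a backward error result for the standard two-sided Jacobi algorithm applied at a single precision $u$, quoted from Drma\v{c}. The plan is to reconstruct it from the elementary error analysis of plane rotations rather than from the mixed-precision machinery.

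Let $A^{(0)}=A$ and let $\wh{A}^{(s)}$ denote the computed iterate after $s$ Jacobi rotations. Each step computes a rotation $\wh{J}_s$ from the $2\times 2$ pivot submatrix and forms $\fl(\wh{J}_s\tp \wh{A}^{(s-1)}\wh{J}_s)$, touching only two rows and two columns.

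\textbf{Step 1 (one rotation).} By the standard analysis of plane rotations \cite[sect.~19.6]{high02-ASNA2}, the computed $\wh{J}_s$ lies within $O(u)$ of an exact rotation $J_s$. Hence
\begin{equation}\notag
\wh{A}^{(s)} = J_s\tp(\wh{A}^{(s-1)}+F_s)J_s,\qquad \tnorm{F_s}\le c\,u\,\tnorm{\wh{A}^{(s-1)}},
\end{equation}
with a modest constant $c$. The contribution from setting the annihilated entries to zero is absorbed into $F_s$.

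\textbf{Step 2 (accumulate).} Telescoping over $S$ steps gives
\begin{equation}\notag
\wh{A}^{(S)}=Q_J\tp(A+\Delta A_1)Q_J,\qquad Q_J=J_1\cdots J_S,
\end{equation}
where $\Delta A_1=\sum_s (J_1\cdots J_{s-1})F_s(J_1\cdots J_{s-1})\tp$. Orthogonality of the $J_s$ keeps $\tnorm{\wh{A}^{(s)}}\le(1+cu)^s\tnorm{A}$, so $\tnorm{\Delta A_1}/\tnorm{A}\le Scu(1+O(u))$.

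\textbf{Step 3 (stopping).} When the stopping criterion \cite[Eq.~1.1]{htwz25} is met, the off-diagonal part $O$ of $\wh{A}^{(S)}$ satisfies $|o_{ij}|\le \mathrm{tol}\sqrt{a_{ii}a_{jj}}$ with $\mathrm{tol}=O(u)$. Therefore $\tnorm{O}\le n\,\mathrm{tol}\,\tnorm{A}(1+O(u))$, and we set $\wh{\Lambda}_J=\diag(\wh{A}^{(S)})$. Putting $\Delta A=\Delta A_1-Q_J O Q_J\tp$ gives $A+\Delta A=Q_J\wh{\Lambda}_JQ_J\tp$.

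\textbf{Step 4 (eigenvector matrix).} The computed $\wh{Q}_J$ is the floating-point product of the $\wh{J}_s$, so
\begin{equation}\notag
\tnorm{\wh{Q}_J-Q_J}\le \sum_s \tnorm{\wh{J}_s-J_s}+O(u)\le c'Su.
\end{equation}

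\textbf{Main obstacle.} The number of sweeps, and hence $S$, must be treated as bounded; in practice $S=O(n^2)$ times a small number of sweeps. The constant $p_5$ then absorbs $S$ together with $n$, giving a polynomial in $n$ as in \cite[Thm.~3.3]{drma20a}. Making this rigorous requires assuming convergence within a fixed number of sweeps, which is the standard convention. Positive definiteness is not needed for this normwise backward error; it matters only for the componentwise refinements used later.
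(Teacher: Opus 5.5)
The paper gives no argument of its own here. The lemma is imported from \cite[Thm.~3.3]{drma20a}, so your rotation-by-rotation reconstruction is a self-contained substitute for a citation rather than a competing proof, and it is correct. It follows the standard template:
\begin{itemize}
\item a backward error for one rotation;
\item telescoping through exact orthogonal similarities;
\item absorbing the stopping-criterion residual $O$ into $\Delta A$ via $\Delta A=\Delta A_1-Q_JOQ_J\tp$;
\item comparing the accumulated product of computed rotations with $Q_J=J_1\cdots J_S$.
\end{itemize}
It also makes explicit what the citation leaves implicit: $p_5$ grows linearly with the total number $S$ of rotations, so a bounded number of sweeps is being assumed.

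Two steps deserve one more sentence each.

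In Step~1, $J_s$ cannot be just any exact rotation near $\wh{J}_s$. It must be the exact rotation that annihilates the $(p,q)$ entry of the stored iterate $\wh{A}^{(s-1)}$, or lie within $O(u)$ of it. Only then are zeroing that entry and using the diagonal update formulas $a_{pp}-ta_{pq}$, $a_{qq}+ta_{pq}$ perturbations of size $O(u)\tnorm{\wh{A}^{(s-1)}}$. This follows from the small relative errors in the computed $\zeta$, $t$, $c$, $s$ of the Jacobi angle formula. It does not follow from the Givens-rotation analysis in Higham's sect.~19.6, which treats a different formula for $c$, $s$ and the application of a given rotation.

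In Step~4, the rounding errors in forming the $S$ products contribute $O(Su)$, with a modest $n$-dependent factor in the 2-norm, not $O(u)$. Likewise, the factors $(1+cu)^S$ that you abbreviate as $1+O(u)$ are really $1+O(Su)$. Your final bounds $Scu$ and $c'Su$ already accommodate both, provided $Su$ is small.

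Beyond your argument, the citation buys context rather than rigor. The paper draws from the same source, together with Demmel and Veseli\'{c}, the scaled componentwise bound $\abs{(\Delta\Atcomp)_{ij}}\le p_7u\sqrt{(\Atcomp)_{ii}(\Atcomp)_{jj}}$ for the same $Q_J$, and uses it immediately afterwards. Your purely normwise telescoping cannot produce that bound, since it controls the perturbation only relative to $\tnorm{A}$. The lemma as stated, however, needs only the normwise bound, which you have established.
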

Let $q_{i}(A)$ denote the eigenvector of $A$ associated with its $i$th largest eigenvalue,
and let $\qh_{i}(A)$ denote the corresponding computed eigenvector, namely $\wh{Q}_{J}(:,i)$.

Similarly to the procedure in section~\ref{sec.acc-svals}, let $\Qb$ be the orthogonal polar factor of $\Qt$. We use Lemma~\ref{lem.sin-triangle-ineqn} to decompose this expression into the following four parts
\begin{align}
  \label{eq.evecerrk-decompose-E1-E4}
  \evecerrk
  & \le E_{1} + E_{2} + E_{3} + E_{4} \\
  \notag
  E_{1} = \sin \big( q_{k}(A), \Qb {Q}_{J}(:,k) \big),
  & \quad E_{2} = \sin\big(\Qb {Q}_{J}(:,k), \Qb \qh_{k}(\Atcomp)\big) \\
\notag
  E_{3} = \sin\big(\Qb \qh_{k}(\Atcomp), \Qt\qh_{k}(\Atcomp) \big),
  & \quad E_{4} = \sin\big(\Qt\qh_{k}(\Atcomp) ,\fl\big(\Qt\qh_{k}(\Atcomp) \big)\big)
\end{align}

Starting from $E_{2}$, by Lemma~\ref{lem.sin-and-norm-diff} and Lemma \ref{lem.bwd-error-two-sided-jacobi}, we have
\begin{equation}\notag
  E_{2} = \sin\big({Q}_{J}(:,k) - \qh_{k}(\Atcomp) \big)
  \le \frac{\tnorm{({Q}_{J} - \wh{Q}_{J})e_{k}}}{\tnorm{{Q}_{J}(:,k)}} \le p_{5}u. 
\end{equation}
where $e_{k}$ is the $k$th column of the identity matrix. Moreover, using the argument in~\eqref{eq.sin-decomp-1} and section~\ref{sec.bounding-e_2} gives $E_{3} \le p_{1}u$
and $E_{4} \le n^{1/2} \gamma \k(\Qt)$. Now, similarly to the analysis in section~\ref{sec.acc-svals}, the problem reduces to $\sin\big(q_{k}(A), \Qb{Q}_{J}(:,k)\big)$.

To find a relationship between $A$ and the eigenvectors of $\Atcomp$, we use $\At$ as the middleman, which will further decompose $E_{1}$ as
\begin{equation}\notag
  E_{1} \le \sin \big( q_{k}(A), \Qb q_{k}(\At) \big) +
  \sin\big(\Qb q_{k}(\At), \Qb{Q}_{J}(:,k) \big). 
\end{equation}
$\Qb q_{k}(\At)$ is the $k$th exact eigenvector of $\Qb\Qt\tp A (\Qb\Qt\tp)\tp$. Using a multiplicative perturbation result for eigenvectors~\cite[Thm.~2.2]{eiip95} and the orthogonality of $\Qb$, the first term is bounded by
\begin{align*}
    \sin\angle\big( q_{k}(A), \Qb q_{k}(\At) \big)
  & \le \frac{\tnorm{\Qb\Qt\tp\Qt\Qb\tp}\tnorm{(\Qt\Qb\tp\Qb\Qt\tp)\inv-I}}{\rho(A,k) - \tnorm{\Qb\Qt\tp\Qt\Qb\tp-I}} + \tnorm{\Qt\Qb\tp-I}\\
  & = 
  \frac{\tnorm{\Qt\tp\Qt}\tnorm{(\Qt\Qt\tp)\inv-I}}{\rho(A,k) - \tnorm{\Qt\tp\Qt-I}} + \tnorm{\Qt-\Qb}.
\end{align*}

By Lemma~\ref{lem.relation-rho-evgap} and Assumption~\eqref{eq.assumption-evgap}, we have 
\begin{equation}
    \rho(A,k) \ge \frac{\evgap(A,k)}{\evgap(A,k)+1}
    \ge \frac{\evgap(A,k)}{\evgap(A,k)+2} \ge 2p_1u \ge 2\tnorm{\Qt\tp\Qt-I},
\end{equation}
which enables us to bound the denominator below by $\rho(A,k)/2$.
For the other terms, since $\tnorm{\Qt\tp\Qt - I} \le p_1u$, we have that
$\tnorm{\Qt\tp\Qt} \le 1+p_1u$~\cite[Lem.~4.2]{stwu02} as well as $\tnorm{\Qt-\Qb} \le p_1u$~\cite[Lem.~8.17]{high08-FM}. Also, by noticing that $\tnorm{\Qt\tp \Qt - I} = \tnorm{\Qt \Qt\tp -I}$ and by using the property of Neumann series~\cite[Chap.~1, Thm.~4.20]{stew98-MA1}, we have that $\tnorm{(\Qt\Qt\tp)\inv-I} \le 2p_1u$. Substituting into the above equation yields 
\begin{equation}
    \notag 
    \sin\angle\big( q_{k}(A), \Qb q_{k}(\At) \big)
    \le \frac{2(1+p_1u)p_1u}{\rho(A,k)/2}+p_1u \le \frac{p_6u}{\rho(A,k)}
    \le p_{6}u + \frac{p_{6}u}{\evgap(A,k)},
\end{equation}
where the last inequality is by Lemma~\ref{lem.relation-rho-evgap}.

Using Lemma~\ref{lem.bwd-error-two-sided-jacobi}, after applying the Jacobi algorithm, $Q_{J}$ satisfies
\begin{equation}\notag
  \Atcomp + \Delta\Atcomp = Q_{J}\wh{\Lambda}_{J}Q_{J}\tp,
\end{equation}
where $\Delta\Atcomp$ is the backward error that arises from applying the two-sided Jacobi algorithm. Because the two-sided Jacobi algorithms are implemented with a specific stopping criterion~\cite{deve92}, $\Delta\Atcomp$ satisfies~\cite[sect.~3.4.3]{drma20a}~\cite[Thm.~3.1]{deve92}
\begin{equation}\notag
  \abs{ \big( \Delta\Atcomp \big)_{ij} }
  \le p_{7} u \sqrt{\big(\Atcomp\big)_{ii} \big(\Atcomp\big)_{jj}}.
\end{equation}

The next lemma describes the relationship between $\Atcomp$ and $\At$ 
\begin{lemma}
    [{\cite[sect.~2.3,~Eq.~2.29]{htwz25}}]
    \label{lem.Atcomp-At}
    Let $\At, \Atcomp \in \R\nn$ be the matrices defined in Algorithm~\ref{alg.mp-precond-Jacobi} and let Assumption~\ref{ass.two-sided-Jacobi} holds. 
    Then,  
    \begin{equation}
        \notag 
        \Atcomp = \At +\Delta\At,\qquad 
        |\Delta \at_{ij}| \le 2u \sqrt{\at_{ii} \at_{jj}},
    \end{equation}
    and consequently,
    \begin{equation}
        \label{eq.htwz-result-2} 
        \sqrt{\at_{ii}} \ge {\sqrt{(\Atcomp)_{ii}}}/{\sqrt{1+2u}}.
    \end{equation}
\end{lemma}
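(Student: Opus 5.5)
The plan is to track the two rounding stages of step~1 of Algorithm~\ref{alg.mp-precond-Jacobi} separately, and to convert every absolute error into the componentwise relative form $u\sqrt{\at_{ii}\at_{jj}}$. For that conversion I will use two facts. First, since $\At=\Qt\tp A\Qt$ is symmetric positive definite, $\sqrt{\at_{ii}\at_{jj}}\ge \lambda_{\min}(\At)$ for all $i,j$. Second, $\lambda_{\min}(\At)\ge(1-p_1u)\lambda_{\min}(A)$, either by \cite[Thm.~5.6]{demm97-ANLA} as in the proof of Lemma~\ref{lem.evgapAt-evgapA}, or by $\sigma_{\min}(\Qt)^2\ge 1-p_1u$.

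\emph{Stage 1 (product at precision $\uh$).} Write $\Athcomp=\At+E_h$. I will apply the standard rounding-error bound for matrix products \cite[Chap.~3]{high02-ASNA2} to the two products $\Qt\tp A$ and $(\Qt\tp A)\Qt$. Keeping the constant loose, this gives entrywise
\[
|(E_h)_{ij}|\le c\,\gammah\,|\Qt(:,i)|\tp|A|\,|\Qt(:,j)|
\]
for a small constant $c$ (about $2$ to first order). By Cauchy--Schwarz and $\tnorm{|A|}\le n^{1/2}\tnorm{A}$, the right-hand side is at most $c\,\gammah n^{1/2}\tnorm{A}(1+p_1u)$. Writing $\tnorm{A}=\k(A)\lambda_{\min}(A)$ and invoking \ref{it.ass-size-gammah} turns this into
\[
|(E_h)_{ij}|\le \tfrac{c}{16}\,u\,(1+p_1u)\,\lambda_{\min}(A)(1-p_1u)\le \tfrac{u}{2}\,\lambda_{\min}(\At)\le \tfrac u2\sqrt{\at_{ii}\at_{jj}}.
\]
The inequality $c(1+p_1u)/16\le 1/2$ holds comfortably since $p_1u<1/2$.

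\emph{Stage 2 (demotion to precision $u$).} Each entry satisfies $(\Atcomp)_{ij}=(\Athcomp)_{ij}(1+\delta_{ij})$ with $|\delta_{ij}|\le u$. Therefore
\[
\Delta\at_{ij}=(E_h)_{ij}+\delta_{ij}\bigl(\at_{ij}+(E_h)_{ij}\bigr),
\]
and using $|\at_{ij}|\le\sqrt{\at_{ii}\at_{jj}}$ (positive definiteness of $\At$) gives
\[
|\Delta\at_{ij}|\le\bigl(\tfrac u2+u(1+\tfrac u2)\bigr)\sqrt{\at_{ii}\at_{jj}}\le 2u\sqrt{\at_{ii}\at_{jj}}.
\]
The final step uses $u<1$, which follows from \ref{it.ass.nukappa}. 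The consequence \eqref{eq.htwz-result-2} is then immediate: taking $i=j$ yields $(\Atcomp)_{ii}\le \at_{ii}+|\Delta\at_{ii}|\le(1+2u)\at_{ii}$, and taking square roots gives the claim.

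The main obstacle is Stage~1. The constant in the componentwise product bound must be matched against the factor $16n^{1/2}$ in \ref{it.ass-size-gammah}; in particular one must check that the $n$ hidden in $\gammah$ already absorbs the inner-product length, so that only the extra $n^{1/2}$ coming from $\tnorm{|A|}$ remains. If the naive bound with $\gamma_{2n}^{(h)}$ turns out too large by a factor of $2$, I would first try to rely on the slack available in the $u/2$ budget, since the final constant $2u$ only requires the $E_h$ contribution to be at most about $u-u^2/2$.
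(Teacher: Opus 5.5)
Your proof is correct. The paper gives no argument of its own and imports the lemma from \cite{htwz25}; your two-stage reconstruction is the natural route behind that citation, since the factor $16n^{1/2}$ in (B2) is tailored to exactly this entrywise bound. The stages are: the componentwise error of the precision-$\uh$ triple product, bounded entrywise by Cauchy--Schwarz and $\tnorm{\abs{A}}\le n^{1/2}\tnorm{A}$; conversion via (B2) and $\lambda_{\min}(\At)\ge(1-p_1u)\lambda_{\min}(A)$ into at most $\tfrac u2\sqrt{\at_{ii}\at_{jj}}$; and then the demotion error. The worry in your last paragraph is unnecessary: $\gammah$ is already $\gamma_n$ at precision $\uh$, and the product bound gives $c=2+\gammah$ with $\gammah<1/16$, so $c(1+p_1u)/16<1/4$.
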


By using Lemma~\ref{lem.Atcomp-At}, the spectral decomposition $Q_{J}\wh{\Lambda}_{J}Q_{J}\tp$ can be written as an additive perturbation on $\At$,
\begin{equation}\notag
  \At + \wt{\Delta}\At = Q_{J}\wh{\Lambda}_{J}Q_{J}\tp,
  \qquad \wt\Delta\At = \Delta\At + \Delta\Atcomp,
\end{equation}
where
\begin{equation}\notag
  \frac{|(\wt\Delta\At)_{ij}|}{\sqrt{\at_{ii}\at_{jj}}}
  = \frac{\abs{\big(\Delta\Atcomp\big)_{ij}}}{\sqrt{\at_{ii}\at_{jj}}} +
  \frac{\abs{\Delta\at_{ij}}}{\sqrt{\at_{ii}\at_{jj}}},
\end{equation}
and additionally, we are able to obtain an upper bound on each term as
\begin{equation}\label{eq.wtDelta-wtA-relative-perturb}
  \frac{|(\wt\Delta\At)_{ij}|}{\sqrt{\at_{ii}\at_{jj}}}
  \le (1+2u)\frac{\abs{\big(\Delta\Atcomp\big)_{ij}}}{
  \sqrt{(\Atcomp)_{ii}(\Atcomp)_{jj}}} + 2u
  \le p_{8}u,
\end{equation}
where $p_8 \ge p_7+ 2p_7u+2$. 
Now, define $\eta := \tnorm{D_{\At} (\wt\Delta\At) D_{\At}}$, and by~\eqref{eq.wtDelta-wtA-relative-perturb}, $\eta \le np_8u$. Together with Lemma~\ref{lem.sin-and-norm-diff} and~\cite[Thm.~2.5]{deve92}, we have%
\begin{equation}\notag
  \sin\big(q_{k}(\At), {Q}_{J}(:,k) \big)
  \le \frac{\tnorm{q_k(\At)-Q_J(:,k)}}{\tnorm{q_k(\At)}}
  \le \frac{\scondt(\At)\eta}{\evgap(\At,k)} + O(\eta^{2})
  \le \frac{p_{9}u \scondt(\At)}{\evgap(A,k)},
\end{equation}
provided that 
\begin{equation}\label{eq.ass-on-eta-evgap}
    4n(1+p_1u)p_8u \le \evgap(A,k).
\end{equation}

\begin{remark}
    A possible concern in applying the eigenvector bound of Demmel and Veseli\'{c}~\cite[Thm.~2.5]{deve92} is the $O(\eta^2)$ remainder in the eigenvector expansion. As in the standard perturbation expansion obtained by differentiating the eigenvalue equation for a simple eigenvalue \cite[sect.~7.2.2]{gova13-MC4}, the remainder can be controlled by the $\eta/\evgap(\At,k)$-type bound only when $\eta$ is small relative to the eigenvalue gap $\evgap(\At,k)$. Condition~\eqref{eq.ass-on-eta-evgap}, together with $\eta \le np_8 u$ and Lemma~\ref{lem.evgapAt-evgapA}, gives
    \begin{equation}
        \notag
        \eta \le np_8 u 
        \le \frac{\evgap(A,k)}{4(1+p_1u)}
        \le \frac{1}{2}\evgap(\At,k),
    \end{equation}
    which ensures that the $O(\eta^2)$ term is controlled by the same relative-gap bound and can be absorbed into $p_9$.
\end{remark}

  

\section{Numerical Experiments}\label{sec.numerical-experiments}
\def\Qtcomp{\Qt_{\mathrm{comp}}}
\def\Rtcomp{\Rt_{\mathrm{comp}}}

We performed numerical experiments to assess the accuracy of the computed singular vectors and eigenvectors, validating Theorem \ref{thm.main-thm-svec} and Theorem \ref{thm.main-thm-evec}, and to illustrate how mixed-precision Jacobi algorithms behave relative to existing methods.
We conducted all numerical experiments in MATLAB R2025b on a MacBook Pro with an M3 Pro chip and 32 GB of RAM. We took the MATLAB implementations of Algorithms~\ref{alg.mp-precond-onesided-Jacobi} and~\ref{alg.mp-precond-Jacobi} from the repositories for our previous works~\cite{htwz25}~\cite{ztw26},\footnote{One-sided Jacobi: \url{https://github.com/zhengbo0503/Code_twz26}; Two-sided Jacobi: \url{https://github.com/zhengbo0503/Code_htwz25}.} and we provide the scripts that generated the results in this section at~\url{https://github.com/zhengbo0503/Code_twz26b}. 
To ensure reproducibility, we seeded the random number generators in all test scripts with \texttt{rng(0)}.
We simulated the quadruple precision with the Advanpix Multiprecision Computing Toolbox~\cite{mct2023} using \texttt{mp.Digits(34)}.

We tested the following algorithms for computing right singular vectors:
\begin{itemize}
    \item MP3JacobiSVD: Algorithm~\ref{alg.mp-precond-onesided-Jacobi}, where the preconditioner $\Vt$ was computed by orthogonalizing the right singular vectors, computed at precision $\ul$, from $\ul$ to $u$ using Householder QR factorization~\cite[Alg.~2]{ztw26},
    \item \texttt{DGESVJ}: LAPACK subroutine for the one-sided Jacobi algorithm,
    \item \texttt{DGEJSV}: LAPACK subroutine for the preconditioned one-sided Jacobi algorithm, and
    \item MATLAB \texttt{svd}: MATLAB built-in function for computing the SVD,
\end{itemize}
and the following algorithms for computing eigenvectors:
\begin{itemize}
    \item MP3Jacobi: Algorithm~\ref{alg.mp-precond-Jacobi}, where the preconditioner $\Qt$ was computed by orthogonalizing the eigenvectors, computed at precision $\ul$, from $\ul$ to $u$ using a Householder QR factorization~\cite[Alg.~2]{htwz25},
    \item Jacobi: implemented according to~\cite[Alg.~3.1]{deve92}, with stopping tolerance set to $n^{1/2}u$,
    \item MP2Jacobi: same as MP3Jacobi, except with $\uh=u$, and
    \item MATLAB \texttt{eig}: MATLAB built-in function for computing the spectral decomposition.
\end{itemize}

For the implementation of Algorithm~\ref{alg.mp-precond-onesided-Jacobi},
it is advisable to first compute the economy size QR factorization $\Atcomp = \Qtcomp \Rtcomp$,
where $\Qtcomp \in \R^{m\times n}$ has orthonormal columns and $\Rtcomp \in \R^{n\times n}$ is upper triangular, and then apply the one-sided Jacobi to $\Rtcomp^{\tp}$.
The reasons are that $\Atcomp^{\tp}$ is an $n\times m$ matrix with $m \ge n$ and therefore does not satisfy the input requirements of \texttt{DGESVJ}, which requires the number of rows to be at least the number of columns.
In addition, if $\Atcomp$ is tall and skinny, then a single sweep of the one-sided Jacobi applied directly to $\Atcomp^{\tp}$ would cost $O(m^{2}n)$ rather than $O(mn^{2})$.
The accuracy of the singular vectors will not degrade after applying the QR factorization, see~\cite[sect.~5.6]{drve08i}~\cite{dges99}.

Since $\svgap(A,k)$ may vary with $k$, in order to assess whether the bounds in Theorem \ref{thm.main-thm-svec} and Theorem \ref{thm.main-thm-evec} hold, we need to inspect the bounds for every single index $k$. However, in all figures except Figure \ref{fig.special-matrix-svec}, we plotted the results only for a single index $k_\star$. This index $k_\star$ was an index which maximized the error $\svecerrk$ or $\evecerrk$ by Algorithm \ref{alg.mp-precond-onesided-Jacobi} or Algorithm \ref{alg.mp-precond-Jacobi}, respectively. Nonetheless, during each numerical experiment, we checked the bounds and found that they held for all indices $k$. Note that this index $k_\star$ changed for each test matrix. The intention is to capture worst-case behavior for each given test matrix, although this is not necessarily the worst case when simultaneously comparing the new methods with the bounds and all of the standard methods.

For sections~\ref{sec.numer-exp.vary-cond} and~\ref{sec.numer-exp.vary-col}, we generated the test matrices with MATLAB's \\\inline{gallery('randsvd',[m,n],kappa,MODE)} using various sizes $m\times n$, condition numbers, and prescribed singular value distributions. The three singular value distributions, labeled $\text{MODE}=3,4,5$, correspond to geometrically distributed singular values, arithmetically distributed singular values, and random singular values whose logarithms are uniformly distributed, respectively.
Since MODE 5 generates the singular values randomly, we repeated each experiment fifteen times and reported the largest observed error.

\subsection{Varying condition number}\label{sec.numer-exp.vary-cond}

In this section, to assess the computed singular vectors, we generated the test matrix $A\in\R^{1000\times 800}$ with $\k(A)$ taking fifteen logarithmically spaced values from $10^3$ to $10^{15}$, and with three different singular value distributions.
On the other hand, to assess the computed eigenvectors, we generated $A$ in the same way, except with size $500\times 500$ and symmetric positive definiteness.

\input{figs/svec-varykappa}
\input{figs/evec-varykappa}

Figures~\ref{fig.svec-vary-kappa} and~\ref{fig.evec-vary-kappa} show the maximum errors in the computed singular vectors and eigenvectors, respectively. We first observe that our bounds in both Theorems~\ref{thm.main-thm-svec} and~\ref{thm.main-thm-evec} were valid with moderate choices of the constants, $(mn)^{1/2}$ and $3n^{1/2}$, respectively. In addition, especially for MODE $3$ and $5$, our algorithm performed best among all algorithms.

However, Figures~\ref{subfig.svec-vary-kappa.mode4} and~\ref{subfig.evec-vary-kappa.mode4} show that, for MODE 4, MATLAB \texttt{svd} and \texttt{eig} produce the smallest errors, although MP3JacobiSVD and MP3Jacobi are only marginally less accurate. The arithmetically distributed singular values in MODE 4 explain this behavior: the consecutive absolute gaps are nearly uniform and of order $1/n$, and hence the absolute gap appearing in standard perturbation bounds~\cite[p.~1206]{deve92} are not much smaller than the corresponding relative gap quantities. In contrast, for MODE $3$ and $5$, where the singular values were geometrically distributed or had uniformly distributed logarithms, the relative gaps were typically much larger than the absolute gaps, especially near the small singular values. This made the advantage of relative gap based error analysis for MP3JacobiSVD and MP3Jacobi more pronounced. Figures~\ref{subfig.svec-vary-n.mode4} and~\ref{subfig.fig.evec-vary-n.mode4} show similar behavior.

\subsection{Varying matrix size}\label{sec.numer-exp.vary-col}
In this section, to assess the computed singular vectors, we generated the test matrix $A\in\R^{1000\times n}$ with $n$ taking fifteen logarithmically spaced values from $10$ to $10^{3}$, fixed $\k(A) = 10^8$, and with three different singular value distributions. Whereas for the computed eigenvectors, we generated the test matrix in the same way, except with size $n\times n$ and symmetric positive definiteness. 

\input{figs/svec-varyn}
\input{figs/evec-varyn}

Figures~\ref{fig.svec-vary-n} and~\ref{fig.evec-vary-n} show that our algorithm performed significantly better than the other three algorithms, except for MODE 4, as expected from the discussion at the end of section~\ref{sec.numer-exp.vary-cond}.
Similarly to the previous section, our derived error bounds held with moderate choices of the error constant.

\subsection{Special matrices}\label{sec.numer-exp.special}
Previous sections used randomly generated matrices, which are not necessarily representative of real-world data. In this section, we assess our algorithms on the following two special matrices from MATLAB Gallery collection,
\begin{enumerate}[label=\upshape(\arabic*),labelwidth=!]
    \item 
    \inline{gallery('kms',100,0.5)}: well-conditioned symmetric positive definite matrix $A\in\R^{100\times 100}$ with $\k(A) \approx 9$, and 
    \item 
    \inline{gallery('lehmer',5e2)}: moderately ill-conditioned symmetric positive definite matrix $A\in\R^{500\times 500}$ with $\k(A) \approx 3\times 10^5$. 
\end{enumerate}
\input{figs/special-matrix-svec}

Figures~\ref{subfig.svec-specal-matrix.kms} and~\ref{subfig.evec-specal-matrix.kms} show that, when the input matrix was well-conditioned, our algorithm did not significantly outperform the other algorithms, but its error always had the same order of magnitude (or better) as that of all of the tested algorithms.
On the other hand, for ill-conditioned matrices, Figures~\ref{subfig.svec-specal-matrix.lehmer} and~\ref{subfig.evec-specal-matrix.lehmer} show that our algorithm gives smaller errors than the other algorithms, which confirms the observation in Figures~\ref{fig.svec-vary-kappa} and~\ref{fig.evec-vary-kappa}.

\subsection{Left singular vectors}
\input{figs/svec-left-varykappa}
In Remark~\ref{rmk.approach-for-left-svecs}, we mentioned that the left singular vectors also satisfy the same error bound as the right singular vectors. We verified this claim experimentally by testing our algorithms along with three other algorithms on the same test matrices as in section~\ref{sec.numer-exp.vary-cond}, and below we report the maximum error in the computed left singular vectors, $\max_k \sin \angle\big( u_k(A), \wh{u}_k(A) \big)$, in Figure~\ref{fig.svec-vary-kappa-left-svec}, where $u_k(A)$ and $\wh{u}_k(A)$ are the $k$th left singular vector of $A$ and its computed counterpart, respectively.
The results show that the errors in the computed left singular vectors also satisfied the same error bound as the right singular vectors which supports our claim in Remark~\ref{rmk.approach-for-left-svecs}.  

\section{Conclusion}\label{sec.conclusion}
We have shown that the mixed-precision Jacobi algorithms proposed in \cite{htwz25} and \cite{ztw26}, compute eigenvectors and singular vectors with high accuracy, complementing the earlier results on the relative accuracy of eigenvalues and singular values. We established error bounds for the computed vectors, measured by $\sin\angle\bigl(v_k(A), \widehat{v}_k(A)\bigr)$, that preserve the relative-gap structure of the classical bounds of Demmel and Veseli\'{c} \cite{deve92}. A key feature of our analysis is that the bounds depend on the scaled condition number of the preconditioned matrix $\widetilde{A}$ rather than that of the original matrix $A$. Since $\kappa(\widetilde{A})$ is typically much smaller than $\kappa(A)$, particularly for ill-conditioned problems, this leads to significantly improved accuracy guarantees.

The numerical experiments strongly support the theory. In particular, they demonstrate that the proposed algorithms outperform standard routines such as MATLAB \texttt{svd} and \texttt{eig}, as well as LAPACK Jacobi-based methods, when applied to ill-conditioned matrices with small absolute gaps but moderate relative gaps.

In this work, we restricted our focus to matrices with simple eigenvalues or singular values. We leave extending the analysis to quantify the angle between nontrivial subspaces for future research.

\section*{Acknowledgments}

The authors thank Yuji Nakatsukasa for suggesting the accuracy of the computed eigenvectors as an interesting topic of investigation.

\bibliographystyle{siamplain}
\bibliography{bib}

\appendix
\section{Inequalities for angles between vectors}

\begin{lemma} \label{lem.sin-triangle-ineqn}
For nonzero vectors $x,y$ and $z$, we have
\begin{equation}
  \notag
  \sin \angle(x,z) \le \sin \angle(x,y) + \sin\angle(y,z).
\end{equation}
\end{lemma}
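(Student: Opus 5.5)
Since every angle here lies in $[0,\pi/2]$, the triangle inequality for $\sin\angle$ should reduce to the ordinary triangle inequality for distances between points on the unit sphere, once sign representatives are chosen suitably. For nonzero $x,y$ with $\tnorm{x}=\tnorm{y}=1$, I will use the identity
\[
\sin\angle(x,y)=\tnorm{x-(x\tp y)\,y}=\min_{\alpha\in\R}\tnorm{x-\alpha y},
\]
the distance from $x$ to $\sspan\{y\}$. This follows from $\tnorm{x-(x\tp y)y}^2 = 1-(x\tp y)^2 = 1-\cos^2\angle(x,y)$. Both sides are invariant under scaling $x$, $y$, $z$ by nonzero constants, so I normalize all three to unit length at the outset.

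The central tool is the orthogonal projector $P_w = I - ww\tp$ for a unit vector $w$. Then $\sin\angle(v,w)=\tnorm{P_w v}$ for any unit $v$. I will write
\[
P_z x = P_z\bigl(x-(x\tp y)y\bigr) + (x\tp y)\,P_z y .
\]
Taking norms, and using $\tnorm{P_z}\le 1$ and $\abs{x\tp y}\le 1$, gives
\[
\sin\angle(x,z)\le \tnorm{x-(x\tp y)y} + \tnorm{P_z y} = \sin\angle(x,y)+\sin\angle(y,z).
\]
This is the desired inequality.

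The only delicate point is correctly identifying $\tnorm{x-(x\tp y)y}$ with $\sin\angle(x,y)$ through the absolute value in the definition \eqref{eq.subspace-diff-angle-def}. The key observation is that $\sin\angle$ depends only on $(x\tp y)^2$, so sign choices are irrelevant and no case split is needed. I expect no real obstacle here. I will also briefly note that $\angle\in[0,\pi/2]$ guarantees $\sin\angle\ge 0$, so that $\sin\angle = \sqrt{1-\cos^2\angle}$.
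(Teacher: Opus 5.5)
Your proof is correct, and it takes a different route from the paper's.

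The paper first invokes the triangle inequality for angles between lines, $\angle(x,z)\le\angle(x,y)+\angle(y,z)$, which it cites from the literature. It then bounds $\sin(\angle(x,y)+\angle(y,z))$ with the sine addition formula. You instead work directly with the projector $I-zz\tp$ and the identity $\sin\angle(v,w)=\tnorm{v-(v\tp w)w}$ for unit vectors. This is essentially the projector/Pythagoras identity the paper itself uses to prove Lemma~\ref{lem.sin-and-norm-diff}.

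What your route buys:
\begin{itemize}
\item It is self-contained, with no appeal to the metric property of angles.
\item It needs no case analysis.
\item It yields the slightly sharper intermediate bound $\sin\angle(x,z)\le\sin\angle(x,y)+\cos\angle(x,y)\sin\angle(y,z)$.
\end{itemize}

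The paper's route is shorter given the cited result. However, its step $\sin\angle(x,z)\le\sin(\angle(x,y)+\angle(y,z))$ relies on monotonicity of sine, which is justified only when $\angle(x,y)+\angle(y,z)\le\pi/2$. For $x=e_1$, $y=e_3$, $z=e_2$ that step reads $1\le 0$.

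The lemma still holds in that regime. If $\angle(x,y)+\angle(y,z)>\pi/2$, then $\angle(y,z)\ge\pi/2-\angle(x,y)$, so $\sin\angle(x,y)+\sin\angle(y,z)\ge\sin\angle(x,y)+\cos\angle(x,y)\ge 1$. This case has to be handled separately in the paper's argument, whereas your projector argument avoids it entirely.
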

\begin{proof}
From \cite{krei69}, we have $\angle(x,z) \le \angle(x,y) + \angle(y,z)$. Using a trigonometric inequality, we have 
\begin{align}
  \notag \sin\angle(x,z) & \le \sin \big(\angle(x,y) + \angle(y,z) \big) \\
  \notag & = \sin\angle(x,y)\cos\angle(y,z)+\sin\angle(y,z)\cos\angle(x,y) \\
  \notag & \le \sin\angle(x,y) + \sin\angle(y,z),
\end{align}
since all of the angles are between $0$ and $\pi/2$.
\end{proof} 

\begin{lemma} \label{lem.sin-and-norm-diff}
For nonzero vectors $x$ and $y$, we have $\sin\angle(x,y) \le \tnorm{x - y}/\tnorm{x}$.
\end{lemma}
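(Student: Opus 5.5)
Since $\sin\angle(x,y)$ depends only on the lines spanned by $x$ and $y$, I may rescale either vector freely. This lets me compare $y$ with the best multiple of $x$ rather than with $x$ itself, which is the whole idea. If $x$ and $y$ are orthogonal, the left side is $1$. The right side is also at least $1$, since $\tnorm{x-y}^2 = \tnorm{x}^2+\tnorm{y}^2 \ge \tnorm{x}^2$. So I may assume $x\tp y \ne 0$.

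The plan is to use the orthogonal decomposition of $x$ with respect to $\sspan\{y\}$. Write $x = \alpha y + r$, where $\alpha = x\tp y/\tnorm{y}^2$ and $r \perp y$. By the definition \eqref{eq.subspace-diff-angle-def}, $\cos\angle(x,y) = \abs{x\tp y}/(\tnorm{x}\tnorm{y}) = \tnorm{\alpha y}/\tnorm{x}$. Pythagoras gives $\tnorm{x}^2 = \tnorm{\alpha y}^2 + \tnorm{r}^2$, and hence
\begin{equation}\notag
\sin\angle(x,y) = \frac{\tnorm{r}}{\tnorm{x}} = \frac{\min_{t\in\R}\tnorm{x - t y}}{\tnorm{x}}.
\end{equation}
The second equality holds because $r$ is the residual of the least-squares projection of $x$ onto $\sspan\{y\}$.

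Taking $t=1$ in the minimum gives $\sin\angle(x,y) \le \tnorm{x-y}/\tnorm{x}$, which is the claim.

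There is no real obstacle. The only point to check is the identity $\sin\angle = \tnorm{r}/\tnorm{x}$, which is exactly where the projection is needed. This identity holds because $\angle(x,y)\in[0,\pi/2]$, so $\sin\angle(x,y) = \sqrt{1-\cos^2\angle(x,y)}$ with the nonnegative root, and Pythagoras turns this into $\tnorm{r}/\tnorm{x}$.
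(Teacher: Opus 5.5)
Your proof is correct and matches the paper's argument: you project $x$ onto $\sspan\{y\}$, use Pythagoras to identify $\sin\angle(x,y)$ with $\min_{t}\tnorm{x-ty}/\tnorm{x}$, and take $t=1$. The separate orthogonal case is unnecessary, since the general argument already covers it with $\alpha=0$ and $r=x$, but it does no harm.
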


\begin{proof}
Let $\mathcal{X} = \sspan\{y/\tnorm{y}\}$, and then the orthogonal projector of $\mathcal{X}$ is $P_{\mathcal{X}} = yy\tp/\tnorm{y}^2$. By the definition of angle~\eqref{eq.subspace-diff-angle-def},
$\tnorm{P_{\mathcal{X}}x } = |y\tp x|/\tnorm{y} = \tnorm{x}\cos\angle(x,y)$.
Using the Pythagorean equality~\cite[p.~10]{stsu90-MPT}, we have
\begin{equation}
    \notag 
    \tnorm{x - P_{\mathcal{X}} x}^2 = \tnorm{x}^2 - \tnorm{P_{\mathcal{X}}x}^2 
    = \tnorm{x}^2\big(1-\cos^2\angle(x,y)\big) = \tnorm{x}^2 \sin^2\angle(x,y).
\end{equation}
Consequently, we are able to bound $\sin\angle(x,y)$ as
\begin{equation}
    \notag
    \sin\angle(x,y) = \frac{\tnorm{x-P_{\mathcal{X}}x}}{\tnorm{x}}
    = \frac{\min_{z\in\mathcal{X}}\tnorm{x-z}}{\tnorm{x}}
    \le \frac{\tnorm{x-y}}{\tnorm{x}},
\end{equation}
where the second equality is due to~\cite[Thm.~2.5]{stsu90-MPT}.
\end{proof}

\section{Proof of Theorem~\ref{thm.multi-perturbation-svecs}}
\begin{proof}
This is a simplified form of the original result, which states that
\begin{equation} \label{eq.original-result-sin-angle}
  \sin \angle\big(v_k(B), v_k(B+\Delta B)\big)
  \le
  \sqrt{2}\bigg(\frac{\delta}{\svgap(B,k)-\alpha}+\beta\bigg),
\end{equation}
provided that $\alpha < \svgap(B,k)$, where
\begin{align*}
  \alpha :=\;&
               \max\Big\{
               \tnorm{(I+\Delta_L)(I+\Delta_L)\tp-I},
               \tnorm{(I+\Delta_R)\tp(I+\Delta_R)-I}
               \Big\} \\
  =\;&
       \max\Big\{
       \tnorm{\Delta_L+\Delta_L\tp+\Delta_L\Delta_L\tp},
       \tnorm{\Delta_R\tp+\Delta_R+\Delta_R\tp\Delta_R}
       \Big\}, \\
  \beta :=\;&
              \max\Big\{
              \tnorm{(I+\Delta_L)\tp-I},
              \tnorm{(I+\Delta_R)-I}
              \Big\} \\
  =\;&
       \max\{\tnorm{\Delta_L\tp},\tnorm{\Delta_R}\}, \\
  \delta :=\;&
               \max\Big\{
               \tnorm{(I+\Delta_L)(I+\Delta_L)\tp},
               \tnorm{(I+\Delta_R)\tp(I+\Delta_R)}
               \Big\} \times \\
             &\max\Big\{
               \tnorm{\big((I+\Delta_L)\tp(I+\Delta_L)\big)\inv-I},
              \tnorm{\big((I+\Delta_R)(I+\Delta_R)\tp\big)\inv-I}
               \Big\}.
\end{align*}

Let us now seek the opportunity to drop the $-\alpha$ term in the denominator of \eqref{eq.original-result-sin-angle}. We first construct an upper bound for $\alpha$:
\begin{equation}\notag
  \alpha  \le
  \max\{2\tnorm{\Delta_L}+\tnorm{\Delta_L}^2,\;
  2\tnorm{\Delta_R}+\tnorm{\Delta_R}^2\}
  \le
  3\max\{\tnorm{\Delta_L},\tnorm{\Delta_R}\}.
\end{equation}
Applying the assumption,
$3\max\{\tnorm{\Delta_L},\tnorm{\Delta_R}\}\le \svgap(B,k)/2$,
on~\eqref{eq.original-result-sin-angle} yields
\begin{align}
  \notag
  \sin \angle\big(v_k(B), v_k(B+\Delta B)\big)
  & \le \sqrt{2}\bigg(\frac{\delta}{\svgap(B,k)/2}+\beta\bigg) \\
  \notag
  & \le \sqrt{2}\bigg(\frac{2\delta+\beta\,\svgap(B,k)}{\svgap(B,k)}\bigg) \\
  \label{eq.sin-angle-intermediate-proof-2}
  & \le 2\sqrt{2} \bigg(\frac{\delta+\beta}{\svgap(B,k)} \bigg)
\end{align}
The last inequality is due to the definition of $\svgap(B,k)$ implies $\svgap(B,k)\le 2$. Now, it remains to bound $\delta$. First,
\begin{equation}
  \notag
  \max\Big\{
  \tnorm{(I+\Delta_L)(I+\Delta_L)\tp},
  \tnorm{(I+\Delta_R)\tp(I+\Delta_R)}
  \Big\}
  \le
  1+3\max\{\tnorm{\Delta_L},\tnorm{\Delta_R}\}.
\end{equation}
The second component of $\delta$ can be bounded by using the property of the Neumann series~\cite[Chap.~1, Thm.~4.20]{stew98-MA1}. Since $\max\{\tnorm{\Delta_L},\tnorm{\Delta_R}\}<1/6$, we have 
\begin{align*}
  \notag
  & 
  \max 
  \Big\{
   \tnorm{\big((I+\Delta_L)\tp(I+\Delta_L)\big)\inv-I},
  \tnorm{\big((I+\Delta_R)(I+\Delta_R)\tp\big)\inv-I}
   \Big\} \\
   & = \max\Big\{ 
   \tnorm{(I+\Delta_L\tp+\Delta_L +\Delta_L\tp \Delta_L)\inv - I},
   \tnorm{(I+\Delta_R+\Delta_R\tp +\Delta_R\Delta_R\tp)\inv - I}
   \Big\}
   \\
   & \le 
  \max\bigg\{
  \frac{3\tnorm{\Delta_L}}{1-3\tnorm{\Delta_L}},
  \frac{3\tnorm{\Delta_R}}{1-3\tnorm{\Delta_R}}
  \bigg\} \\
  & \le \frac{3\max\{\tnorm{\Delta_L},\tnorm{\Delta_R}\}}{1-3\max\{\tnorm{\Delta_L},\tnorm{\Delta_R}\}}.
\end{align*}
Consequently,
\begin{equation} \notag
  \delta
  \le
  \frac{1+3\max\{\tnorm{\Delta_L},\tnorm{\Delta_R}\}}
  {1-3\max\{\tnorm{\Delta_L},\tnorm{\Delta_R}\}}
  \cdot
  3\max\{\tnorm{\Delta_L},\tnorm{\Delta_R}\}.
\end{equation}
Using again the assumption $\max\{\tnorm{\Delta_L},\tnorm{\Delta_R}\}<1/6$, we obtain
\begin{equation}\notag
  \delta \le 9\max\{\tnorm{\Delta_L},\tnorm{\Delta_R}\}.
\end{equation}
Now, we have $\delta+\beta \le 10\max\{\tnorm{\Delta_{L}}, \tnorm{\Delta_{R}}\}$.
Substituting this into~\eqref{eq.sin-angle-intermediate-proof-2} completes the proof.
\end{proof}
\end{document}